\theoremstyle{plain}
\newtheorem{thm}{Theorem}[section]
\newtheorem{cor}[thm]{Corollary}
\newtheorem{lem}[thm]{Lemma}
\newtheorem{prop}[thm]{Proposition}
\theoremstyle{definition}
\newtheorem{defn}{Definition}[section]
\theoremstyle{remark}
\newtheorem{rem}{Remark}[section]
\newtheorem{ex}{Example}[section]
\numberwithin{equation}{section}
\newcommand{\CC}{\mathbb C}
\newcommand{\N}{\mathbb N}
\newcommand{\GG}{\mathbb G}
\newcommand{\PP}{{\mathbb P}}
\newcommand{\OO}{\mathcal{O}}
\newcommand{\II}{\mathcal{I}}
\newcommand{\EE}{\mathcal{E}}
\newcommand{\lf}{\lfloor}
\newcommand{\rf}{\rfloor}
\newcommand{\ra}{\rightarrow}
\newcommand{\epi}{\twoheadrightarrow}
\newcommand{\op}{\oplus}
\newcommand{\OP}{\OO_{\PP^2}}
\newcommand{\TP}{\T_{\PP^2}}
\DeclareMathOperator{\HH}{H} 
\DeclareMathOperator{\hh}{h}
\DeclareMathOperator{\Ext}{Ext}
\DeclareMathOperator{\rk}{rk}
\DeclareMathOperator{\coker}{Coker} 
\DeclareMathOperator{\T}{T}
\begin{document}

\title[Planes of matrices of constant rank and gg vector bundles]{Planes of matrices of constant rank and globally generated vector bundles}

\author{Ada Boralevi}
\address{Scuola Internazionale Superiore di Studi Avanzati, via Bonomea 265, 34136 Trieste, Italy}
\curraddr{}
\email{ada.boralevi@sissa.it}

\author{Emilia Mezzetti}
\address{Dipartimento di Matematica e Geoscienze, Sezione di Matematica e Informatica,  Universit\`a di Trieste, Via Valerio 12/1, 34127 Trieste, Italy}
\curraddr{}
\email{mezzette@units.it}

\thanks{Research partially supported by MIUR funds, PRIN 2010-2011 project \lq\lq Geometria delle variet\`a algebriche'', 
and by Universit\`a degli Studi di Trieste -- FRA 2013 project \lq\lq Geometria e topologia delle variet\`a''.}

\subjclass[2010]{14J60, 15A30}

\keywords{Skew-symmetric matrices, constant rank, globally generated vector bundles}

\begin{abstract}
We consider the problem of determining all pairs $(c_1, c_2)$ of Chern classes of rank $2$ bundles that are  cokernel 
of a skew-symmetric matrix of linear forms in $3$ variables, having constant rank $2c_1$ and size $2c_1+2$. We completely solve
the problem in the \lq\lq stable'' range, i.e. for pairs with $c_1^2-4c_2<0$, proving that the additional condition $c_2\leq {{c_1+1}\choose 2}$ is necessary and sufficient. 
For $c_1^2-4c_2\ge 0$, we prove
that there exist globally generated bundles, some even defining an
embedding of $\mathbb P^2$ in a Grassmannian, that cannot correspond to a matrix
of the above type. This extends previous work on $c_1\leq 3$.
\end{abstract}

\maketitle

\section{Introduction}
Even if it arises naturally in linear algebra, the problem of classifying linear systems of matrices of constant rank has many interactions with algebraic geometry. On the 
one hand, the understanding of these linear systems has greatly benefited from the use of algebraic geometry tools, as happened for example in 
\cite{sylvester,Westwick,Westwick1,bo_fa_me}. On the other side, linear systems have been proved useful in approaching some classical problems in geometry: examples of this phenomenon are, among many others, \cite{pirola,Ilic_JM,dp_m}. The main connection between the two areas comes from interpreting such a linear system as a vector bundles map, whose kernel and cokernel are again vector bundles on some projective variety. 

\vskip.05in

Let us explain more precisely the setting in which we work. Let $V$ be a vector space of dimension $n$ over $\CC$, and let 
$A \subseteq V \otimes V$ be a linear subspace of dimension $d$. Fixing bases, we can write down $A$ as a $n \times n$ 
matrix of linear forms in $d$ variables, that we denote by the same letter $A$.
We say that $A$ has \emph{constant rank} $r$ if every non-zero matrix obtained specializing the $d$ variables has rank $r$. 

The matrix $A$ can be viewed naturally as a map $V^* \otimes \OO_{\PP A}(-1)\xrightarrow{A} V \otimes \OO_{\PP A}$, and as such it gives an exact sequence:

\begin{equation}\label{estensione1}
  \xymatrix{0 \ar[r] & K \ar[r] & V^* \otimes \OO_{\PP A}(-1)\ar[r]^-A & V \otimes \OO_{\PP A} \ar[r]&E \ar[r]&0,}
\end{equation}
 
\noindent where both the kernel $K$ and the cokernel $E$ are vector bundles of rank $n-r$ on $\PP A$. 

A computation of invariants shows that there is a bound on the maximal dimension that the subspace $A$ can attain, 
namely for values of $2 \leq r \leq n$, such maximal dimension is comprised between $n-r+1$ and $2(n-r)+1$ \cite{Westwick1}. 
We stress the fact that these bounds are not effective in general. Moreover for a given value of $d$, only some values of $r$ are allowed. 

The further assumption that the subspace $A$ lies either in $S^2 V$ or in $\wedge^2 V$ 
yields a symmetry of the exact sequence \eqref{estensione1}, and gives an isomorphism $K=E^*(-1)$.  If that is the case, 
a similar computation of invariants as the one above shows us that the sequence \eqref{estensione1} 
determines the first Chern class $c_1(E)=\frac{r}{2}$. In particular, the rank $r=2c$ is even. 

We now want to focus on the special case $n=2c+2$, where the cokernel $E$ is a vector bundle of rank $2$ with $c_1(E)=c$, and where the maximal dimension 
of $A$ can vary between $3$ and $5$. From the fact  that $E$ has rank $2$ we deduce  that $E^*\simeq E(-c)$. 
All in all, if $A$ is either symmetric or skew-symmetric, then the exact sequence \eqref{estensione1} induced by $A$ can be rewritten as the following 2-step extension:
\begin{equation}\label{estensione2}
  \xymatrix{0 \ar[r] & E(-c-1) \ar[r] & \OO_{\PP A}(-1)^{2c+2}\ar[r]^-A & \OO_{\PP A}^{2c+2} \ar[r]&E \ar[r]&0.}
\end{equation}

For the symmetric case we refer the reader to \cite{Ilic_JM}, where it is shown that the effective bound for the dimension of 
linear spaces of symmetric matrices of co-rank $2$ is independent of $c$, and always equal to $3$. 

On the contrary, in the skew-symmetric case the bound does depend on the value $c$. If $c$ is odd, or if $c=2$, then the bound is equal to $3$, 
but for higher (even) values of $c$ it is in general not known. Almost all (that is, all except one) known examples of dimension $4$ 
were produced by the two authors together with D. Faenzi in \cite{bo_fa_me}, and they have $c=4$ and $6$. For these values it is easy to see that $4$ is an 
effective bound. It is conjectured that $4$ is in fact always an effective bound, and that $5$-dimensional examples do not exist.

\vskip.05in

In this paper we work on the skew-symmetric case with $\dim A=3$, so we deal with bundles on the projective plane. Recall that in this setting 
the sequence \eqref{estensione1} entails that $c_1(E)=c$, but it does not determine the value of $c_2(E)$. Hence it is quite natural to ask the following:

\vskip.05in

\textbf{Question.} \emph{Let $c$ be a positive integer. What are all the possible pairs $(c,y)$ such that there exists a skew-symmetric matrix of linear forms in $3$ variables, 
having constant rank $2c$, size $2c+2$, and cokernel $E$, with Chern classes $(c_1(E),c_2(E))=(c,y)$?}  

\vskip.05in

It is immediate from \eqref{estensione2} that the bundle $E$ is globally generated. Pairs $(c,y)$ such that there exists a globally generated rank $2$ vector bundle $F$ on $\PP^2$ 
with $c_1(F)=c$ and $c_2(F)=y$ are called \emph{effective}, and have been recently completely classified in \cite{ellia_effective}.

In a similar fashion, we call \emph{$m$-effective} a pair $(c,y)$ such that there exists a skew-symmetric matrix of linear forms in $3$ variables, 
having constant rank $2c$, size $2c+2$, and cokernel $E$, with Chern classes $(c_1(E),c_2(E))=(c,y)$. If this is the case, the vector bundle $E$ is also called $m$-effective.

Clearly $m$-effectiveness implies effectiveness, so our question can be rephrased by asking which effective pairs are also $m$-effective.

Remark that for low values of $c$ every globally generated bundle defining an embedding of $\PP^2$ in the Grassmannian is $m$-effective. More 
precisely, the case $c=1$ corresponds to the linear spaces contained in the Grassmannian of lines in $\PP^3$, hence it is classical. 
The cases $c=2$ and $c=3$ have been treated in \cite{Manivel_Mezzetti} and in \cite{Fania_Mezzetti} respectively. 
In particular in \cite{Manivel_Mezzetti} there is a complete classification of the orbits of linear spaces of $6\times 6$ skew-symmetric matrices of constant rank $4$, up to the natural action of the group $SL_6$. 

Here we tackle our question for general values of $c$. We begin in Section \ref{bounds} by proving the following upper bound: if the pair $(c,y)$ is $m$-effective then $0\leq y\leq {c+1 \choose 2}.$ 
Our main result (Theorem \ref{teo range stabile}) is a positive answer to our question for all pairs $(c,y)$ satisfying the previous condition and with $c^2-4y<0$, 
i.e. in the so-called \emph{stable range}. Moreover our answer is ``constructive'', in that for every pair $(c,y)$ we explicitly provide a vector bundle $E$ and its 
associated matrix $A$. 
Our method is somewhat similar to the construction used by Le Potier in \cite{le_potier_stabili}: 
we consider rank $2$ bundles that are \emph{quotients} (in a sense made precise in Definition \ref{def quoziente}) of direct sums of bundles 
of the form $\OP(i)$, $i\geq 1$, and $Q$, where $Q$ is the universal quotient bundle on $\PP^2$. These can be regarded as building blocks for the matrices we are looking for, 
and in fact all our examples will be constructed from these blocks. This is the content of Section \ref{prel}.
We stress the fact that, even though the Chern classes of the bundles that we build belong to the stable range, not all those bundles are stable, 
as explained in Corollary \ref{cor stabili/instabili}.

The unstable range where $c^2-4y \ge 0$ is treated in Section \ref{unstable}; there we prove that the question has in general negative answer, by producing a series of counterexamples. 
The paper ends with a particularly interesting class of examples of effective pairs that are not $m$-effective, but that nevertheless 
induce an embedding of the projective plane $\PP^2$ in the Grassmannian of lines $\GG(1,2c+1)$. 

\vspace{0,2cm}

\noindent \textbf{Acknowledgements.} We would like to thank Philippe Ellia for sharing his ideas on globally generated vector bundles, and 
for suggesting the proof of Proposition \ref{c,2c}.

\section{Preliminaries}\label{prel}

We start by introducing a large class of $m$-effective bundles, obtained as ``quotients'' of certain decomposable bundles. We warn the reader that 
by quotient we mean something more specific than the usual definition, that is: 
\begin{defn}\label{def quoziente} A vector bundle $E$ on a projective space $\PP$ is a \emph{quotient of a vector bundle $F$} if there exist $s \ge 0$ sections of 
	$F$ inducing the exact sequence: 
	\begin{equation}\label{sec quoziente}
		0\ra \OO_{\PP}^{s} \ra F\ra E \ra 0.
	\end{equation}
\end{defn}

This same definition is used in \cite{sierra_ugaglia_gg} and \cite{Fania_Mezzetti}. Notice that if $E$ is a quotient of $F$, then their Chern classes 
satisfy $c_i(E)=c_i(F)$ for all $i\leq \rk E$.

\vskip.03in

As anticipated in the Introduction, we are interested in quotients of bundles of the form:
\begin{equation}\label{sommadiretta} 
	F=(\oplus_{i\geq 1}\OP(i)^{a_i})\oplus Q^b,
\end{equation} 
with $i\geq 1$ and $a_i, b\geq 0$ for all $i$, i.e. quotients of special decomposable bundles that are direct sums of $Q$ and the line bundles $\OP(i)$, $i\geq 1$. 
$Q$ is the universal quotient bundle on $\PP^2$, or, if one prefers, $Q=\TP(-1)$ is a twist of the tangent bundle. 
All $m$-effective bundles appearing in this paper are of this form. To our knowledge, there are no other examples. 

\begin{thm}\label{quot}
Let $E$ be a rank $2$ vector bundle on $\PP^2$, quotient of a direct sum of copies of $Q$, 
and of the line bundles $\OP(i)$, $i\geq 1$. Then $E$ is $m$-effective.
\end{thm}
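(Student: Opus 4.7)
The plan is constructive. Given the presentation
$$0 \to \OO^s \to F \to E \to 0, \qquad F = Q^{\oplus a} \oplus \bigoplus_{j=1}^k \OP(i_j),$$
I would build the desired skew-symmetric matrix $A$ for $E$ in three stages: first, produce explicit ``building-block'' skew-symmetric matrices for each summand of $F$; second, stack them into a block-diagonal matrix $A_F$ resolving $F$; third, reduce the size of $A_F$ by $s$ using the data of the sections.

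For Stage 1, I would exhibit for $Q$ a $4\times 4$ skew-symmetric matrix of linear forms of constant rank $2$ with cokernel $Q$ (built from the Euler sequence), and for each $\OP(i)$ with $i\ge 1$, a $(2i+1)\times(2i+1)$ skew-symmetric matrix of linear forms of constant rank $2i$ with cokernel $\OP(i)$. The case $i=1$ is the classical cross-product matrix
$$\begin{pmatrix} 0 & z & -y \\ -z & 0 & x \\ y & -x & 0 \end{pmatrix},$$
and the general case follows by an iterated Koszul-type band construction. Block-diagonal assembly of these pieces yields Stage 2: a skew-symmetric matrix $A_F$ of size $N := 2c+s+2$ and constant rank $2c$, with cokernel $F$, where $c = c_1(F) = c_1(E)$ and $s = \rk F - 2$.

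For Stage 3, the $s$ sections $\sigma_j \in H^0(F)$ lift uniquely (since $H^1(\OP(-1)^N)=0$) to constant vectors $\tilde\sigma_j \in \CC^N$ spanning an $s$-dimensional subspace $W \subset \CC^N$. After a skew-symmetry-preserving change of basis $A_F \mapsto P A_F P^T$, I would choose a $(2c+2)$-dimensional complement $W' \subset \CC^N$ of $W$ and define $A_E$ as the restriction of the conjugated matrix to $W' \otimes W'$, i.e.\ the principal $(2c+2) \times (2c+2)$ submatrix complementary to $W$. By construction $A_E$ is skew-symmetric, and a diagram chase identifies its cokernel with $E = F / \sigma(\OO^s)$.

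The hardest step will be verifying that $A_E$ has constant rank $2c$ at every $p \in \PP^2$. A fiberwise computation gives $\ker A_E(p) = \ker A_F(p) \cap W'$, and this intersection has the desired dimension $2$ precisely when $\ker A_F(p) + W' = \CC^N$ for all $p$; equivalently, the projection $\ker A_F(p) \to W$ along $W'$ must be surjective at every point of $\PP^2$. Establishing this uniform transversality is where both the explicit form of the building blocks for $Q$ and $\OP(i)$ and the freedom in choosing $P$ (equivalently, $W'$) come in: one must check, summand by summand, that the components of the sections $\tilde\sigma_j$ can be realized as projections of elements in the kernel of the corresponding building block throughout $\PP^2$, so that no bad locus arises. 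Once this is in place, the constant rank condition and the identification of $\coker(A_E)$ with $E$ follow from the original exact sequence.
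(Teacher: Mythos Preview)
Your Stages 1 and 2 are exactly what the paper does: build the block matrices $A_Q$ and $A_i$ for $Q$ and $\OP(i)$, then take their direct sum to get $A_F$ of size $N$ and constant rank $2c$ with cokernel $F$.

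The divergence is at Stage 3, and here there is both a conceptual oversimplification and a technical error.

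Conceptually: recall that ``$E$ is $m$-effective'' means only that the \emph{pair} $(c_1(E),c_2(E))$ is $m$-effective, i.e.\ that \emph{some} rank $2$ bundle with those Chern classes arises as the cokernel of a constant-rank matrix of the right size. Since any rank $2$ quotient of $F$ has the same $c_1,c_2$ as $E$, it is enough to exhibit \emph{one} projection of $A_F$ down to size $2c+2$ that keeps the rank constant. The paper does exactly this, invoking Proposition~\ref{proj}: the projective plane $\PP(A_F)$ sits inside $\sigma_c\GG(1,N-1)\setminus\sigma_{c-1}\GG(1,N-1)$, and a dimension count guarantees one can project it isomorphically into $\sigma_c\GG(1,2c+1)\setminus\sigma_{c-1}\GG(1,2c+1)$. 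No reference to the particular sections defining $E$ is needed. You are instead attempting the stronger statement that the \emph{given} $E$ is itself such a cokernel; the paper explicitly notes only that a \emph{general} rank $2$ quotient arises this way, and the stronger claim would require real work (if it is even true for every quotient).

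Technically: your fibrewise identity $\ker A_E(p)=\ker A_F(p)\cap W'$ is not correct. If $A_E$ is the principal submatrix of the skew form $A_F(p)$ on the complement $W'$, then
\[
\ker A_E(p)\;=\;W'\cap (W')^{\perp_{A_F(p)}}\;\supseteq\;W'\cap \ker A_F(p),
\]
and the inclusion is generally strict. So even if your transversality condition $\ker A_F(p)+W'=\CC^N$ held at every $p$, it would only give $\dim\ker A_E(p)\ge 2$, not equality; the rank of $A_E$ could still drop. The ``hardest step'' you flag is therefore not just hard but, as formulated, insufficient. The paper's route via Proposition~\ref{proj} bypasses all of this by choosing the projection centre generically rather than tying it to the given sections.
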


Before proving Theorem \ref{quot}, we recall a few facts about the secant varieties of the Grassmannians of lines. 
Let $\GG(1,n-1)\subset \PP(\Lambda^2 V)$ denote the Grassmannian of the $2$-dimensional vector subspaces of  a vector space $V$ of dimension 
$n$, or, equivalently, of the lines of $\PP(V)$. 
We denote by $\sigma_k\GG(1,n-1)$ its $k$-th secant variety, $k\geq 1$, i.e. the Zariski closure of the union of the $(k-1)$-spaces generated by $k$ independent points of $\GG(1,n-1)$. 
It is well known that the points of $\sigma_k\GG(1,n-1)$ can be interpreted as skew-symmetric matrices of size $n$ and rank at most $2k$. Therefore, given a skew-symmetric matrix $A$ of linear forms in $3$ variables, of constant rank $2c$  and size $n$, it is natural to interpret its projectivization $\PP(A)$ as a (projective) $2$-plane contained in the 
stratum $\sigma_c\GG(1,n-1)\setminus \sigma_{c-1}\GG(1,n-1).$
 
The following result can be found in \cite{Fania_Mezzetti}; we give a version that is suitable for our purposes.
\begin{prop}\cite[Coroll. 5.9]{Fania_Mezzetti}\label{proj}
	Let $c$ be any positive integer, and let $A$ be a $3$-dimensional linear space of matrices of size $N\geq 2c+2$ and constant rank $2c$. 
	Then $\PP(A)$ can be isomorphically projected to $\sigma_{c}\GG(1,2c+1)\setminus \sigma_{c-1}\GG(1,2c+1)$.  
 \end{prop}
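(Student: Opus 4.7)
The plan is to produce the desired projection by choosing a sufficiently generic codimension-$(2c+2)$ subspace $U_0\subset V$ and working with the induced linear projection $\pi:V\twoheadrightarrow V':=V/U_0$, together with its wedge $\Lambda^2\pi:\Lambda^2 V\to\Lambda^2 V'$. Two things must be checked for a generic $U_0$: first, that every $A_t$, $t\in\PP(A)$, retains rank $2c$ after projection; second, that $\Lambda^2\pi$ is injective on $\PP(A)$.

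For the rank preservation, I would show that $\Lambda^2\pi(A_t)$ has rank $2c$ in $\Lambda^2 V'$ if and only if $U_0\cap\im A_t=\{0\}$. The incidence variety $\{(t,v)\in\PP(A)\times V:v\in\im A_t\}$ is irreducible of dimension $2+2c$, so its image $S\subset V$ is an algebraic cone of dimension at most $2c+2$. When $N>2c+2$ the cone $S$ is a proper subvariety of $V$, and a routine dimension count shows that a general $U_0$ of dimension $N-2c-2$ satisfies $U_0\cap S=\{0\}$, hence $U_0\cap\im A_t=\{0\}$ for every $t$.

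For the injectivity, one exploits that $\ker(\Lambda^2\pi)=U_0\wedge V$, and that $\omega\in\Lambda^2 V$ lies in $U_0\wedge V$ iff the induced skew-form vanishes on $U_0^\perp\times U_0^\perp$, equivalently $\omega(U_0^\perp)\subset U_0$ regarding $\omega$ as a map $V^*\to V$. Applied to $A_t$ under the rank-preservation condition, this forces $A_t(U_0^\perp)\subset U_0\cap\im A_t=\{0\}$, so that $U_0^\perp\subset\ker A_t$. Since $\dim U_0^\perp=2c+2$ while $\dim\ker A_t=N-2c$, either $N<4c+2$ (in which case no such containment can occur) or else the incidence $\{(t,W)\in\PP(A)\times\mathrm{Gr}(2c+2,V^*):W\subset\ker A_t\}$ has image of proper dimension in $\mathrm{Gr}(2c+2,V^*)$ (the codimension being at least $4c^2+4c-2>0$ for $c\ge 1$); in either case a generic $U_0^\perp$ avoids all these kernels.

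Hence for $U_0$ in a nonempty open subset of $\mathrm{Gr}(N-2c-2,V)$, the linear map $\Lambda^2\pi$ embeds the $2$-plane $\PP(A)$ into $\PP(\Lambda^2 V')$, with image contained in $\sigma_c\GG(1,2c+1)\setminus\sigma_{c-1}\GG(1,2c+1)$. The mild technicality to watch is ensuring that the two genericity conditions can be imposed simultaneously, but both obstructions are proper closed subvarieties of the Grassmannian parametrising $U_0$, so their complement is open and nonempty; the rest is just tracking what $\Lambda^2\pi$ does to skew-symmetric forms.
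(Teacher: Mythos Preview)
Your argument is correct and follows precisely the strategy the paper attributes to \cite{Fania_Mezzetti}: choose a generic centre of projection $U_0\subset V$ of dimension $N-2c-2$, and push the plane $\PP(A)$ forward via $\Lambda^2\pi$ to $\PP(\Lambda^2(V/U_0))$. The paper itself does not prove the proposition; it merely cites \cite[Coroll.~5.9]{Fania_Mezzetti} and remarks that the centre can be chosen explicitly so that the resulting projection of $\PP^{N-1}$ to $\PP^{2c+1}$ has the required property---this is exactly your construction.

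One small observation: your separate injectivity argument is redundant. Once you have established that for a generic $U_0$ every $\Lambda^2\pi(A_t)$ has rank $2c$, it follows in particular that $\Lambda^2\pi(A_t)\neq 0$ for every $[t]\in\PP(A)$; since $\Lambda^2\pi$ is linear and $A$ is a linear subspace, this already forces $\Lambda^2\pi|_A$ to be injective, hence an isomorphism onto its image. So the second dimension count (avoiding $U_0^\perp\subset\ker A_t$) is not needed, and the rank-preservation step alone suffices.
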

 
Therefore, from a matrix $A$ of size $N$ we can obtain, by projection, a new matrix $A'$ of size $2c+2$, whose rank remains constant and equal to $2c$. 
In \cite{Fania_Mezzetti} it is shown how one can choose the centre of projection to produce explicitly a projection of $\PP^{N-1}$ to $\PP^{2c+1}$ with the required property.
In particular \cite[Examples 5 and 6]{Fania_Mezzetti} provide explicit examples of such a projection.
 
\begin{rem}\label{proiezione}
	If $\coker A$ is a vector bundle $F$ of rank $N-2c$, performing this projection is equivalent to taking a rank $2$ quotient $E$ of $F$, such that $E=\coker A'$. Conversely, a general rank $2$ quotient $E$ of $F$ gives rise to a constant rank matrix $A'$. This can be seen by combining the two exact sequences \eqref{estensione2} and \eqref{sec quoziente} in a commutative 
	diagram, using diagram chase and the Snake Lemma. 
\end{rem}

To construct constant rank matrices of co-rank $2$ and of the desired size, we will therefore first build bundles of high rank, and then project to get 
quotients of rank $2$. These high rank vector bundles are all constructed from two types of building blocks, namely the universal quotient $Q$ 
and the line bundles $\OP(i)$, $i\geq 1$. 

We introduce these two types of building blocks in the following examples.

\begin{ex}\emph{The universal quotient $Q$.} The bundle $Q$ is $m$-effective. An extension of the form \eqref{estensione2} 
	for it can be constructed by taking the direct sum of the Euler sequence: 
\begin{equation} 0 \ra \OP(-1) \ra \OP^3 \ra Q \ra 0 \end{equation}
and of its dual twisted by $-1$. The resulting extension is therefore:
\begin{equation}\label{cotang}
  \xymatrix{0 \ar[r] & \Omega_{\PP^2} \ar[r] & \OP(-1)^{4}\ar[r]^-{A_Q} & \OP^4 \ar[r]&Q \ar[r]&0,}
\end{equation}
where  $A_Q$ is a $4\times 4$ matrix of constant rank $2$ of the form:
 \begin{eqnarray*}\begin{pmatrix}
            0 & a & b  & c  \\
           -a & 0 & 0  & 0  \\
           -b &  0 & 0  & 0  \\
           -c & 0  &  0  & 0  \\
       \end{pmatrix}.  \end{eqnarray*}
\end{ex}

\begin{ex}\emph{The line bundles $\OP(i)$.} For every odd number $2i+1\geq 3$  a general linear system of dimension 3 of  skew-symmetric matrices of size $2i+1$ is of constant rank $2i$, with extension 
\begin{equation}\label{Ai}
  \xymatrix{0 \ar[r] & \OP(-i-1) \ar[r] & \OP(-1)^{2i+1}\ar[r]^-{A_i} & \OP^{2i+1} \ar[r]&\OP(i) \ar[r]&0.}
\end{equation}
This simply follows by the fact that the secant variety $\sigma_i\GG(1,2i)$ has codimension $3$ in $\PP(\Lambda^2 V)$, where now $\dim V=2i+1$. An explicit example is the matrix 
 \begin{eqnarray*} A_i =\begin{pmatrix}
            0 & . & .  & . & 0 & a & b  \\
            . &    &    &   & a & b &  c   \\
            . &    &    &   & \iddots & c & 0 \\
              &    &    & \iddots  &   &   & \\
           0 & -a & \iddots  & & & & .  \\
           -a &  -b & -c  & & & & .  \\
           -b & -c  &  0  & . & . &  . & 0 \\
       \end{pmatrix}.  \end{eqnarray*}

\end{ex}
\medskip

We are now ready to prove Theorem \ref{quot}.
\begin{proof}[Proof of Theorem \ref{quot}] Assume that $E$ is a rank $2$ quotient of a bundle $F$ of the form \eqref{sommadiretta}. 
	Take the direct sum of $b$ copies of the matrix $A_Q$ and, for all $i$, 
	$a_i$ copies of the matrix $A_i$ and let $A$ be the direct sum of all these matrices: its Coker is $F$. To conclude it is enough to apply Proposition \ref{proj} and 
	Remark \ref{proiezione}.
\end{proof}

\begin{rem}\label{chern}
The Chern classes of the bundle $F$ appearing in \eqref{sommadiretta} can be computed using repeatedly the well-known formulas 
$c_1(A\oplus B)=c_1(A)+c_1(B)$ and $c_2(A\oplus B)=c_2(A)+c_2(B)+c_1(A)c_1(B)$, with $A$ and $B$ any two vector bundles on 
a projective space $\PP$. It is then straightforward (and boring) to see that:
\begin{align}
\label{c1} &c_1(F)=\sum_{i}ia_i+b,\\
\label{c2} &c_2(F)= \sum_i\  i^2{{a_i}\choose{2}}+\sum_{i\neq j} i j a_ia_j+{{b+1}\choose{2}}+b\:\Big{(}\sum_iia_i\Big{)}.
\end{align}
\end{rem}

\begin{rem} Let $E$ be a globally generated rank $2$ bundle on $\PP^2$. If $c_1(E)\leq 2$, then $E$ is a quotient of a bundle of the form \eqref{sommadiretta}, see \cite{sierra_ugaglia_gg}. 
	If $c_1(E)=3$, the same property holds true under the additional assumption that $E$ defines an embedding in $\GG(1,7)$, see \cite{Fania_Mezzetti}. 
 \end{rem}

The example below gives some information on the cases $c_1=4$ and $c_1=6$.
 
\begin{ex} Let $\EE$ be a (mathematical)  instanton bundle of charge $k$ on $\PP^3$, i.e. a rank $2$ vector bundle on $\PP^3$ defined as the cohomology of a linear monad 
	of type $\OO_{\PP^3}(-1)^k \to \OO_{\PP^3}^{2k+2} \to \OO_{\PP^3}(1)^k$. In \cite{bo_fa_me} the following two facts are proved:
 \begin{enumerate}
 \item  if $\EE$ is any charge $2$ instanton, then $E=\EE(2)$ is the cokernel of a skew-symmetric matrix of linear forms in $4$ variables, having size $10$ and constant rank $8$. 
 \item If $\EE$ is a general charge $4$ instanton, then $E=\EE(3)$ is the cokernel of a skew-symmetric matrix of linear forms in $4$ variables having size $14$ and constant rank $12$.  
 \end{enumerate}

It is clear that the restrictions of these bundles to $\PP^2$ are $m$-effective. 

Notice that the bundle $E$ on $\PP^3$ cannot be a quotient of a bundle of higher rank. Indeed, an exact sequence of type \eqref{sec quoziente} 
corresponds to an element of the group $\Ext^1(E,\OO_{\PP^3}^s) \simeq \HH^1(E^*)$. 
From the monad defining $\EE$ it is easy to compute cohomology and check that this $\HH^1$ group vanishes. 

The behavior of the restricted bundle $E|_{\PP^2}$ is quite different. The same cohomology computation gives us $h^1(E|_{\PP^2}^*)=k$, with 
$k$ the charge of the instanton. For the (restricted) instanton of charge 2 we can say a little more. The $m$-effective pair associated to $E$ is in this case 
$(4,6)$. In the recent work \cite{anghel_coanda_manolache} it is shown that the only possibilities for such a pair are to be associated to either a quotient of 
$\OP(1)^4$, or a quotient of $\OP(1) \oplus \OP(2) \oplus Q$. The first case corresponds to a stable Steiner bundle, while the second one is semistable. 
For the behavior of the charge $2$ instanton restricted to planes, see also \cite[Prop. 9.10]{Hart_ist}.
\end{ex}

\section{Bounds and necessary conditions}\label{bounds}

Let $(c,y)$ be an effective pair, and let $E$ be an associated globally generated vector bundle on $\PP^2$. From the fact that the
restriction of $E$ to a line is also globally generated, we deduce that $c=c_1(E) >0$. By taking a general section of $E$ 
and looking at its (smooth!) zero locus we also see that one must have $y=c_2(E) \ge 0$. 
It turns out that $m$-effectiveness imposes not just a lower, but also an upper bound on this second value $y$; this will be the content of Proposition 
\ref{sharp bounds}. One of the main ingredients of its proof consists in a necessary vanishing in cohomology 
that a vector bundle $E$ must satisfy in order to fit in a 2-step extension of type \eqref{estensione2}. 
This easy fact will come in handy in Section \ref{unstable} and thus deserves to be mentioned in the following:

\begin{lem}\label{h1 vanishing} 
The exact sequence \eqref{estensione2} entails that $\hh^1(E(-1))=\hh^2(E(-1))=0$. Therefore, the vanishing of these cohomology 
groups is a necessary condition for a  bundle $E$ to be $m$-effective.	
\end{lem}

\begin{proof}
	We compute the cohomology of the vector bundle $E(-1)$ using the (twisted) exact sequence \eqref{estensione2}. 
	Since the cohomology of $\OP(-2)$ and $\OP(-1)$ vanishes in all degrees, we deduce that $\hh^1(E(-1))=\hh^2(E(-1))=0$.
\end{proof}

\begin{prop}\label{sharp bounds}
  Let $c$ and $y$ be two non-negative integers. If the pair $(c,y)$ is $m$-effective, then $c>0$ and $y$ satisfies the sharp inequality 
 \mbox{$0 \le y \le {c+1 \choose 2}$}.
\end{prop}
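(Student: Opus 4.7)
The lower bounds $c > 0$ and $y \ge 0$ follow from the effectiveness of $(c,y)$ as noted in the paragraph preceding the statement: restriction of $E$ to a line gives $c_1 > 0$, and a generic section of $E$ vanishes on a zero-dimensional scheme of non-negative length. The real content of the proposition is the upper bound $y \le \binom{c+1}{2}$, which I would extract from a single cohomological consequence of the sequence \eqref{estensione2}.

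The plan is to twist \eqref{estensione2} by $\OO_{\PP^2}(-1)$, obtaining
$$0 \to E(-c-2) \to \OP(-2)^{2c+2} \to \OP(-1)^{2c+2} \to E(-1) \to 0,$$
and split it via the intermediate image bundle $N$ into two short exact sequences. Since $H^i(\OP(-1)) = H^i(\OP(-2)) = 0$ for $i = 0, 1$ on $\PP^2$, chasing the long exact cohomology sequences attached to the two short exacts yields first $H^0(N) = 0$ from the $N \to \OP(-1)^{2c+2} \to E(-1)$ piece, and consequently $H^1(E(-c-2)) \cong H^0(N) = 0$ from the $E(-c-2) \to \OP(-2)^{2c+2} \to N$ piece.

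Together with the trivial vanishing $H^0(E(-c-2)) = 0$ (its determinant has negative degree), this forces $\chi(E(-c-2)) = h^2(E(-c-2))$. Serre duality on $\PP^2$, combined with $E^* \cong E(-c)$ for a rank-$2$ bundle, identifies $h^2(E(-c-2))$ with $h^0(E(-1))$, and a direct Riemann-Roch computation gives $\chi(E(-c-2)) = \binom{c+1}{2} - y$. Thus $h^0(E(-1)) = \binom{c+1}{2} - y$, and non-negativity of cohomology yields the claimed bound. Sharpness is achieved by taking $E$ to be a rank-$2$ quotient of $Q^c$, which by Theorem \ref{quot} is $m$-effective and by Remark \ref{chern} has $c_2 = \binom{c+1}{2}$.

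The step that requires the most care is the vanishing $H^1(E(-c-2)) = 0$, since this is precisely where $m$-effectiveness enters essentially: it uses the full length-$4$ resolution, not merely the surjection $\OP^{2c+2} \twoheadrightarrow E$. Everything else is Riemann-Roch and Serre duality bookkeeping, and I do not anticipate further subtleties.
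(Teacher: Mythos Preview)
Your argument is correct and follows the paper's approach: twist \eqref{estensione2} by $-1$, use that $\OP(-1)$ and $\OP(-2)$ are acyclic on $\PP^2$, and conclude that $\binom{c+1}{2}-y$ equals a non-negative cohomology dimension. The only differences are cosmetic---the paper reads off $\hh^1(E(-1))=\hh^2(E(-1))=0$ directly (so $\chi(E(-1))=\hh^0(E(-1))\ge 0$) and thereby avoids your Serre-duality detour through $E(-c-2)$, and it also exhibits $\OP\oplus\OP(c)$ for sharpness at $y=0$, which you do not address.
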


\begin{proof} 
The fact that $c>0$ and $y \ge 0$ follows from effectiveness, as already remarked. On the other hand,  Lemma \ref{h1 vanishing} entails that 
the Euler characteristic $\chi(E(-1))$ equals to $\hh^0(E(-1))$, and therefore must be non-negative. 
Using Riemann-Roch we compute that $0 \le \chi(E(-1))=\frac{c(c+1)}{2}-c_2(E)$, which is equivalent to $y\leq {c+1 \choose 2}$.

For the sharpness part of the statement: the lower bound $y=0$ is attained by taking $E=\OP \op \OP(c)$, 
whereas if $E$ is a Steiner bundle defined by the short exact sequence:

\begin{equation}\label{steiner}
  0 \ra \OP(-1)^c \ra \OP^{c+2} \ra E \ra 0,
\end{equation}

\noindent then $c_2(E)=\frac{c(c+1)}{2}={c+1 \choose 2}$. 

To see that a Steiner bundle is indeed $m$-effective, notice that $E$ is a Steiner bundle defined by \eqref{steiner} if and only if it is a quotient of $Q^c$. 
Hence $m$-effectiveness follows from Theorem \ref{quot}. Indeed, one simply needs to look at the following commutative diagram, where all rows and columns are exact. 

$$\xymatrix{
&&0 \ar[d]&0 \ar[d]&\\
&&\OP^{2c-2} \ar[d]\ar@{=}[r]&\OP^{2c-2} \ar[d]&\\
0 \ar[r]&\OP(-1)^{c} \ar[r]\ar@{=}[d]&\OP^{3c}\ar[d]\ar[r]&Q^c\ar[d]\ar[r]&0\\
0 \ar[r]&\OP(-1)^{c} \ar[r]&\OP^{c+2}\ar[d]\ar[r]&E\ar[d]\ar[r]&0\\
&&0&0&}
$$
\noindent The central row is nothing but $c$ copies of the Euler sequence.
\end{proof}

\vskip.1in

We denote by $I$ the closed interval $I:=[0,{c+1 \choose 2}]$ that we obtain from Proposition \ref{sharp bounds}. A well-known result of 
Schwarzenberger entails that the Chern classes $(c,y)$ of a stable rank $2$ bundle on $\PP^2$ satisfy the inequality $\Delta:= c^2-4y<0$ (
and $\Delta \neq -4$). The interval $I$ on the other hand contains values for which $\Delta$ can be both negative and non-negative. 
This distinction prompts us to divide $I$ into two sub-intervals $I=I_u \cup I_s$. The sub-intervals \mbox{$I_s:=]\frac{c^2}{4},{c+1 \choose 2}]$} and 
\mbox{$I_u:=[0,\frac{c^2}{4}]$} will be called the \emph{stable} and (respectively) \emph{unstable range}.

\section{The stable range}\label{stable}

In this section we study the stable range: we show that in such range effectiveness and $m$-effectiveness coincide, or, in other words, 
that all pairs $(c,y)$ such that $y \in I_s$ are $m$-effective. Indeed, from \cite[Prop. 6.5]{le_potier_stabili} and \cite[Coroll. 1.5]{ellia_effective} 
we learn that all pairs $(c,y)$ such that $y \in I_s$ are effective. 

\begin{thm}\label{teo range stabile} 
	Let $c$ be a positive integer, and let \mbox{$I_s=]\frac{c^2}{4},{c+1 \choose 2}]$}. All pairs $(c,y)$, with $y$ any integer belonging to $I_s$, are $m$-effective. 
	Hence for all such pairs there exists a skew-symmetric matrix $A$ of linear forms in $3$ variables, having size $2c+2$, constant rank $2c$, cokernel $E$, and 
	$(c_1(E),c_2(E))=(c,y)$. Moreover such a matrix can be constructed explicitly. 	
\end{thm}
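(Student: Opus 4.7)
The plan is to prove the theorem by exhibiting, for every $(c,y)$ with $y\in I_s$, an explicit decomposable bundle $F=\bigoplus_{i\geq 1}\OP(i)^{a_i}\oplus Q^b$ with $c_1(F)=c$ and $c_2(F)=y$. Theorem \ref{quot} then produces the required constant-rank skew-symmetric matrix $A$ by assembling, in block-diagonal fashion, $a_i$ copies of the matrices $A_i$ and $b$ copies of $A_Q$ from Section \ref{prel}, followed by the projection in Proposition \ref{proj} to reach size $2c+2$. Since $F$ is globally generated, a general rank-$2$ quotient $E$ of $F$ is locally free, so that step presents no obstruction.

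The first task is a clean formula for $c_2(F)$. Starting from (\ref{c1})-(\ref{c2}), substituting $c=\sum_i i a_i+b$, and rearranging (equivalently, expanding the Chern polynomial $\prod_i(1+ih)^{a_i}\cdot(1+h+h^2)^b$ modulo $h^3$), one arrives at
\[c_2(F)=\binom{c}{2}+b-\sum_{i\geq 2}a_i\binom{i}{2}.\]
The problem thus reduces to a Diophantine one: find non-negative integers $(a_i)_{i\geq 2}$ and $b$ with $b+\sum_{i\geq 2}i a_i\leq c$ and $\sum_{i\geq 2}a_i\binom{i}{2}-b=\binom{c}{2}-y$; then $a_1:=c-b-\sum_{i\geq 2}i a_i\geq 0$ completes the decomposition.

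For $y\in[\binom{c}{2},\binom{c+1}{2}]$ I take $b=y-\binom{c}{2}\in[0,c]$ and $a_i=0$ for $i\geq 2$, giving $F=\OP(1)^{c-b}\oplus Q^b$; the endpoint $y=\binom{c+1}{2}$ recovers the Steiner bundle of Proposition \ref{sharp bounds}. For $y\in(c^2/4,\binom{c}{2})$, set $D:=\binom{c}{2}-y$, so that the stability hypothesis becomes $1\leq D<c(c-2)/4$. The natural one-parameter family $F_k=\OP(k)\oplus\OP(1)^{c-k-b}\oplus Q^b$ has $c_2(F_k)=\binom{c}{2}-\binom{k}{2}+b$; letting $k\geq 2$ and $b\in[0,c-k]$ vary, this covers the interval $\bigl[\binom{c}{2}-\binom{k}{2},\,\binom{c+1}{2}-\binom{k+1}{2}\bigr]$. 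A direct calculation shows that consecutive intervals for $k$ and $k+1$ overlap whenever $k\leq (c-1)/2$, so together with the previous case every $y$ in $\bigl[\binom{c}{2}-\binom{K}{2},\,\binom{c+1}{2}\bigr]$ is realised, where $K$ is the largest admissible index in this regime.

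The main obstacle is filling the residual values near the bottom of $I_s$ (and, for certain intermediate $y$, isolated gap values left by the single-twist family). For these I enlarge the family to include two or more higher-twist summands, $F=\OP(k_1)\oplus\OP(k_2)\oplus\OP(1)^{a_1}\oplus Q^b$, for which $c_2(F)=\binom{c}{2}-\binom{k_1}{2}-\binom{k_2}{2}+b$. Realising $D$ then amounts to writing $D=\binom{k_1}{2}+\binom{k_2}{2}-b$ with $k_1+k_2+b\leq c$, and the strict inequality $D<c(c-2)/4$ is precisely what guarantees that the cost budget is large enough. A greedy procedure (select the largest $k_1$ with $\binom{k_1}{2}\leq D$, then fit the residue via $\binom{k_2}{2}$ and $b$), supplemented by a brief case analysis at the endpoints, exhausts all remaining values. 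Once $(a_i,b)$ is fixed, the bundle $F$, its rank-$2$ quotient $E$, and the associated matrix $A$ are produced explicitly as described, yielding the constructive statement asserted in the theorem.
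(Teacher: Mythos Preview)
Your approach coincides with the paper's: reduce to the Diophantine problem $D=\sum_{i\ge 2}a_i\binom{i}{2}-b$ subject to the ``budget'' constraint $\sum_i ia_i+b\le c$, handle $I_{s2}$ via $\OP(1)^{c-b}\oplus Q^b$, and for $I_{s1}$ run a greedy procedure on $D$. The reformulated $c_2$ formula and the treatment of $I_{s2}$ are correct.

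The genuine gap is in the last paragraph. The sentence ``the strict inequality $D<c(c-2)/4$ is precisely what guarantees that the cost budget is large enough'' is the entire content of the theorem for $I_{s1}$, and you assert it rather than prove it. The paper reduces (via padding with $\OP(1)$'s) to the tightest case $c=c_m(D):=1+\lceil\sqrt{4D+1}\rceil$, writes $D=\binom{k_1}{2}+\cdots+\binom{k_h}{2}+a_2$ greedily, sets $\gamma_0(D)=k_1+\cdots+k_h+2a_2$, and then must show some $\gamma_i(D)\le c_m(D)$, where the $\gamma_i$ are the modifications obtained by replacing the tail $\binom{k_i}{2}+\cdots$ by $\binom{k_i+1}{2}-b_i$. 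This inequality is \emph{not} immediate: it is established by an induction on $k_1$, resting on the lemma that $k_1+c_m(z)\le c_m\bigl(\binom{k_1}{2}+z\bigr)$ for $k_1\ge 25$, together with a brute-force verification for all $D$ with $k_1(D)\le 24$ (i.e.\ $D\le 299$). That finite check is substantial, not ``a brief case analysis at the endpoints''. Your single-twist family already illustrates the difficulty: for $c=8$ the intervals for $k=5$ and $k=6$ leave $y=17$ uncovered, and one must argue carefully that the enlarged family always closes such gaps under the sole hypothesis $D<c(c-2)/4$. Without supplying either the inductive lemma or an alternative uniform bound, the proposal is incomplete precisely at the point where the paper does all its work.
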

\begin{proof}
We will prove that, for any pair $(c,y)$ with $c>0$ and $y$ in the stable range $I_s$, there exists a rank $2$ bundle $E$, with $(c_1(E),c_2(E))=(c,y)$, quotient of a bundle $F$ of the form \eqref{sommadiretta} $F=(\oplus_{i\geq 1}\OP(i)^{a_i})\oplus Q^b$. Then it will be enough to apply Theorem \ref{quot} to conclude that $E$ is $m$-effective. Moreover, 
we will give an explicit algorithm to compute the bundle $F$.

First, divide $I_s$ into two subintervals: $I_s=I_{s1} \cup I_{s2}$, where $I_{s1}=]\frac{c^2}{4},{c\choose 2}[$ and \mbox{$I_{s2}:=[{c \choose 2},{c+1 \choose 2}]$}.

For the interval $I_{s2}$ it is enough to reformulate in terms of $m$-effectiveness the results contained in \cite[Section 6]{le_potier_stabili}: if $y\in I_{s2}$, 
there is an open subset of the  moduli space $M_{\PP^2}(2;c,y)$ whose general element is a rank $2$ bundle $E$ with $c_1(E)=c$ and $c_2(E)=y$, 
fitting in a short exact sequence of the form:
$$0 \ra \OP^{y-2-\frac{c(c-3)}{2}} \ra Q^{y-{{c}\choose{2}}} \op \OP(1)^{{{c+1}\choose{2}}-y} \ra E \ra 0.$$
In other words, $E$ is a quotient of 
a direct sum of copies of $Q$ and $\OP(1)$.
The computation of the Chern classes follows from Remark \ref{chern}.

\vskip.05in

For $I_{s1}$ things are much more complicated, and we thus proceed step by step.  

\vskip.05in

Fix $c>0$ and $y$ with $\frac{c^2}{4}<y<{c\choose 2}$. 

\emph{\underline{Step 1.}} It is convenient to introduce the constant $x={c\choose 2}-y,$ with $0<x<\frac{c^2-2c}{4}$. 
Note that if $y$ is of the form \eqref{c2}, i.e. $y=c_2(F)$ with $F$ as in \eqref{sommadiretta}, then it is easy to compute that: 
\begin{equation}\label{x} 
	\mbox{$x=\sum_{i\geq 2} \ a_i{i \choose 2}-b.$}
\end{equation}
We stress that the value $a_1$ does not appear in the expression of $x$. In the next steps we will look for a convenient expression of $x$ of the form \eqref{x}, 
suitable for our purposes.

\vskip.05in

\emph{\underline{Step 2.}} For $x>0$, set $c_m(x):=\min\{c\in\mathbb N\mid x<\frac{c^2-2c}{4}\}$. So $c_m(x)=1+\lceil \sqrt{4x+1}\rceil$, where 
for a real number $z$ we denote by $\lceil z\rceil$ the minimum integer number \emph{strictly} bigger than $z$. We warn the reader that 
$\lceil z\rceil$ coincides with the ceiling of $z$ only when $z$ is not an integer. 
Notice that, if there exists a bundle $F$ as in \eqref{sommadiretta} for the pair \mbox{$(c_m(x), {c_m(x)\choose 2}-x)$}, then there exists a bundle $F'$ as in \eqref{sommadiretta} for any pair $(c, {c\choose 2}-x)$ with $c\geq c_m(x)$.
Indeed it is enough to take $F'=F\oplus \OP(1)^{c-c_m(x)}$. 

Thus we focus our attention on the pairs $(c, y)$ of the form $(c_m(x), {c_m(x)\choose 2}-x)$. 

\vskip.05in

\emph{\underline{Step 3.}} So let $x \ge 0$ be an integer. 
If $x\leq 2$, we set $a_2:=x$. If instead $x>2$, let $k_1:=\max\{k>2\mid {k \choose 2}\leq x\}$. Then $x={k_1\choose 2}+z$, with $0\leq z\leq k_1-1$. 

Iterate this construction for $x$, starting from $z$: if $z\leq 2$, we set $a_2=z$, if instead $z>2$, we let $k_2=\max\{k>2\mid {k \choose 2}\leq z\}$. Keeping on 
repeating the same construction, one ends up with an expression:

\begin{equation}\label{decomposizione x}
\mbox{$x={k_1\choose 2}+{k_2\choose 2}+\ldots +{k_h\choose 2}+a_2,$}
\end{equation}

\noindent with uniquely determined $a_2\in \{0,1,2\}$, and, if $x>2$, $h\geq 1$, $k_1>k_2>\ldots> k_h>2$. 
Remark that $x-{k_1\choose 2}\leq k_1-1$, $x-{k_1\choose 2}-{k_2\choose 2}\leq k_2-1$, and so on.

\vskip.05in

\emph{\underline{Step 4.}} Define now the new value $\gamma_0(x):=k_1+k_2+\ldots +k_h+2a_2$. If $\gamma_0(x)\leq c_m(x)$, then we can define a bundle $F$ with $c_1(F)=c_m(x)$ by setting $F=\OP(k_1)\oplus \OP(k_2)\oplus\ldots\oplus\OP(2)^{a_2}\oplus \OP(1)^\alpha$, with $\alpha=c_m(x)-\gamma_0(x)$. From \eqref{x} we get that $c_2(F)={c_m(x)\choose 2}-x$, hence 
in this case we are done.

\vskip.05in

\emph{\underline{Step 5.}} If instead $\gamma_0(x) > c_m(x)$, then for any $1\leq i\leq h$ we consider the expressions $x= {k_1\choose 2}+\ldots +{k_{i-1}\choose 2}+{k_i+1\choose 2}-b_i$,  which define the numbers $b_i$ with $1\leq b_i\leq k_i$. Set $\gamma_i(x):=k_1+k_2+\ldots +k_{i-1}+(k_i+1)+ b_i$, for $i >0$. Then, as soon as $\gamma_i(x)\leq c_m(x)$ for some $i=1,\ldots,h$, we can reduce to the previous 
case and take the following bundle with the desired Chern classes: $F_i=\OP(k_1)\oplus \ldots \oplus \OP(k_{i-1})\oplus \OP(k_i+1)\oplus Q^{b_i}\oplus \OP(1)^{\alpha_i},$ with $\alpha_i=c_m(x)-\gamma_i(x)$.

To conclude the proof we only need to show that this is always the case, i.e. that every time we repeat the construction above we can indeed find such an $i$ with 
$\gamma_i(x)\leq c_m(x)$. 

\vskip.1in
\textbf{Claim.} \emph{For any $x>0$ there exists $i\geq 0$ such that $\gamma_i(x)\leq c_m(x)$.}
\vskip.1in

If $x=1,2$ the claim is clearly true.

If $x\geq 3$, we use induction on $k_1$. Let us now denote $k_1$ by $k_1(x)$, in order to 
underline its dependence on $x$. We will check that the claim is true for the numbers $x$ having low $k_1(x)$, where the term \lq\lq low'' will be made precise in a moment. 
For the inductive step we observe that, if $x={{k_1(x)}\choose 2}+z$, with $z\leq k_1(x)-1$, then $k_1(z)<k_1(x)$ and $\gamma_i(x)=k_1(x)+\gamma_i(z)$. 
(This holds for $i=0$ and for any $i\geq1$ such that both $\gamma_i(x)$ and $\gamma_i(z)$ make sense.) 
Therefore, assuming that the claim is true for $z$, we want to  deduce that it is true for $x$. It is enough to prove that $k_1(x)+c_m(z)\leq c_m(x)$. From the next Lemma it follows that this is true for $k_1(x)\geq 25$. Hence the first values of $k_1(x)$ to check preliminarily are $k_1(x)\leq 24$.

\begin{lem}
Assume that  $x={{k_1(x)}\choose 2}+z$ with $z\leq k_1(x)-1$, and that moreover $k_1(x)\geq25$. Then:
\begin{equation}\label{cm}
\mbox{$k_1(x)+c_m(z)\leq c_m({{k_1(x)}\choose {2}} +z).$}
\end{equation}
\end{lem}
\begin{proof}
This amounts to verify the following inequality  (where for simplicity we write $k$ instead of $k_1(x)$): 
\begin{equation}
\mbox{$k+\left\lceil\sqrt{4z+1}\right\rceil  \leq \left\lceil {\sqrt{4{k \choose 2}+4z+1}}\right\rceil = \left\lceil {\sqrt{2k^2-2k+4z+1}}\right\rceil.$}
\end{equation} 
Clearly it is enough to show that:
\begin{equation}
k+1+\sqrt{4z+1}  \leq   {\sqrt{2k^2-2k+4z+1}},
\end{equation} 
and this reduces to:
\begin{equation}
\mbox{$z\leq \frac{k^4-8k^3+10k^2-3}{16(k+1)^2}=\frac{(k-1)^2(k^2-6k-3)}{16(k+1)^2}.$}
\end{equation} 
Since $z\leq k-1$, it is enough to prove:
\begin{equation}\label{terzogrado}
\mbox{$k-1\leq \frac{(k-1)^2(k^2-6k-3)}{16(k+1)^2}.$}
\end{equation} 
It is easy to check that inequality \eqref{terzogrado} is satisfied for $k\geq 25$.
\end{proof} 
A brute force computation shows that the statement is true for all values of $x$ having $k_1(x)\leq 24$, and this concludes Step 5 as well as the proof 
of Theorem \ref{teo range stabile}
\end{proof}

\begin{rem}
	For more details on the explicit computation and on the techniques used, we refer to Section \ref{esempio}, where we work out 
	the example $c=8$.
\end{rem}

Even if their Chern classes belong to the stable range, not all the $m$-effective vector bundles that we build are stable. 
Recall that a rank $2$ vector bundle on $\PP^2$ is stable if and only if its normalized bundle (i.e. the twist of the bundle with first Chern class equal to 
$0$ or $-1$) has no sections. Using this we can prove the following:

\begin{prop}
Let $E$ be a rank $2$ quotient of the bundle $F=\oplus_{i\geq 1}\OP(i)^{a_i}\oplus Q^b$, with $c_1(E)=c$. Set $\iota:=\max\{i\mid a_i>0\}$. $E$ is stable if and only if
$\iota <c/2$.
\end{prop}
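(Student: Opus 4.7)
The plan is to reduce the statement to a single cohomological computation. Recall the standard characterization for rank-$2$ bundles on $\PP^2$: $E$ with $c_1(E)=c$ is stable if and only if every line subsheaf $\OP(d)\hookrightarrow E$ satisfies $2d<c$, equivalently $h^0(E(-d))=0$ for all integers $d\geq\lceil c/2\rceil$. So it suffices to pin down the largest integer $d$ for which $h^0(E(-d))>0$ and to check that this number equals $\iota$.

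First I would twist the defining quotient sequence $0\to\OP^s\to F\to E\to 0$ by $\OP(-d)$ for $d\geq 1$ and take cohomology. Since $H^0(\OP(-d))=0$ and $H^1(\OP(-d))=0$ on $\PP^2$ for every $d\geq 1$, the long exact sequence collapses to an isomorphism
\[
H^0(E(-d))\;\cong\;H^0(F(-d))\;=\;\bigoplus_{i\geq 1}H^0(\OP(i-d))^{a_i}\;\oplus\;H^0(Q(-d))^b.
\]
The line-bundle summands vanish exactly when $i<d$, while twisting the tautological sequence $0\to\OP(-1)\to\OP^3\to Q\to 0$ by $-d$ immediately yields $H^0(Q(-d))=0$ for every $d\geq 1$. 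Hence $h^0(E(-d))=0$ iff $d\geq \iota+1$, so the largest $d$ admitting a nonzero section of $E(-d)$ is exactly $\iota$.

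This gives both implications at once. If $\iota\geq c/2$, a nonzero element of $H^0(E(-\iota))$ produces an inclusion $\OP(\iota)\hookrightarrow E$ of slope $\iota\geq c/2=\mu(E)$, destabilizing $E$. Conversely, if $\iota<c/2$, then $h^0(E(-d))=0$ for every $d\geq \lceil c/2\rceil$, so $E$ admits no line subsheaf of slope $\geq c/2$ and is therefore stable. The one mild subtlety is that the line subsheaf $\OP(\iota)\hookrightarrow E$ coming from a section need not be saturated, but its saturation is a line subbundle $\OP(\iota')$ with $\iota'\geq\iota$ and $h^0(E(-\iota'))>0$, which forces $\iota'=\iota$ by maximality of $\iota$. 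I expect no serious obstacle here; the only nontrivial input is the vanishing $H^0(Q(-d))=0$ for $d\geq 1$, which is instant from the tautological sequence.
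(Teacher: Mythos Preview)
Your proof is correct and follows essentially the same approach as the paper. The paper's proof is extremely terse: it simply notes that $E$ is stable if and only if the normalized twist $E(-\nu)$ has no sections (with $\nu=\lceil c/2\rceil$), and asserts this is equivalent to $i-\nu<0$ for every $i$ with $a_i\neq 0$. Your argument makes this assertion rigorous by explicitly using the quotient sequence to obtain $H^0(E(-d))\cong H^0(F(-d))$ for $d\geq 1$ and then reading off the vanishing from the summands; the saturation remark at the end is a nice touch but not strictly needed, since for rank~$2$ bundles the section criterion already suffices.
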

\begin{proof} 
It is enough to note that the normalized bundle of $E$ is $E(-\nu)$,  with $\nu= \frac{c}{2}$ if $c$ is even and $\nu=\frac{c+1}{2}$ if $c$ is odd. Therefore $E$ is stable if and only if $i-\nu<0$ for any $i$ such that $a_i\neq 0$.
\end{proof}
 \begin{cor} \label{cor stabili/instabili} If $c$ is even and $y\geq {\frac{c^2}{4}}+c-3$ (resp. if $c$ is odd and $y\geq{\frac{c^2}{4}}+{\frac{2c-3}{4}}$), there exist  $m$-effective stable bundles.
  \end{cor}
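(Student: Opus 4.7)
The plan is to exhibit, for each admissible pair $(c,y)$ in the corollary's range, a direct-sum bundle $F$ of the shape \eqref{sommadiretta} whose top index $\iota(F):=\max\{i:a_i>0\}$ is strictly less than $c/2$; then taking $E$ to be a rank $2$ quotient of $F$, the preceding Proposition guarantees stability of $E$ while Theorem \ref{quot} supplies $m$-effectiveness.

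I would first handle the threshold value. For $c$ even I would choose $F_{0}=\OP(\tfrac{c}{2}-1)^{2}\oplus \OP(2)$: Remark \ref{chern} yields $c_{1}(F_{0})=c$ and $c_{2}(F_{0})=(\tfrac{c}{2}-1)^{2}+4(\tfrac{c}{2}-1)=\tfrac{c^{2}}{4}+c-3$, with top index $\iota=\tfrac{c}{2}-1<\tfrac{c}{2}$ for $c\geq 6$ (at $c=6$ the summands coincide, giving $\OP(2)^{3}$, still with $\iota=2<3$). For $c$ odd the analogous choice $F_{0}=\OP(\tfrac{c-1}{2})^{2}\oplus \OP(1)$ yields $c_{1}=c$, $c_{2}=\tfrac{(c-1)^{2}}{4}+(c-1)=\tfrac{c^{2}+2c-3}{4}$, and $\iota=\tfrac{c-1}{2}<\tfrac{c}{2}$.

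For values of $y$ above the threshold I would combine three families of constructions. Locally, swapping the low-degree summand of $F_{0}$ for $\OP(1)^{2}$, $\OP(1)\oplus Q$ or $Q^{2}$ in the even case (and $Q$ in the odd case) shifts $c_{2}$ upward by $1,2,3$ (resp.\ $1$) while preserving both $c_{1}=c$ and the bound $\iota<c/2$, as a direct application of Remark \ref{chern} confirms. At the top of the range, for $y\in[\binom{c}{2},\binom{c+1}{2}]$ the construction in the proof of Theorem \ref{teo range stabile} involves only $\OP(1)$'s and $Q$'s, so $\iota\leq 1<c/2$ automatically for $c\geq 4$. For intermediate values I would modify the greedy decomposition of Theorem \ref{teo range stabile}, forbidding any $\OP(k_{i})$ with $k_{i}\geq c/2$ and replacing the offending blocks by combinations of $\OP(j)$ with $2\leq j\leq\tfrac{c}{2}-1$ together with compensating $Q$'s.

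The main obstacle I foresee is precisely this intermediate step: for each admissible $x=\binom{c}{2}-y$ with $0\leq x\leq\tfrac{c^{2}-6c+12}{4}=2\binom{c/2-1}{2}+1$, the system $\sum_{i\geq 1}i\,a_{i}+b=c$ and $\sum_{i\geq 2}\binom{i}{2}a_{i}-b=x$ must admit a nonnegative integer solution with $a_{i}=0$ for $i\geq c/2$. The flexibility provided by $Q$-summands (which contribute $-1$ to $x$) and by $\OP(1)$'s (which affect only $c_{1}$) should reduce this to an elementary but tedious case analysis parallel to the one in the proof of Theorem \ref{teo range stabile}, now within the stability-respecting range of indices.
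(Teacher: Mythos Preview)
Your approach shares the key idea with the paper: exhibit bundles $F$ of the form \eqref{sommadiretta} with $\iota(F)<c/2$ and appeal to the preceding Proposition and Theorem \ref{quot}. You also land on exactly the same threshold bundle $F_{0}=\OP(\tfrac{c}{2}-1)^{2}\oplus\OP(2)$ (and its odd analogue). The difference is one of strategy. The paper does not try to manufacture an $F$ for every $y$ above the threshold; instead it argues that the \emph{minimum} of $c_{2}(F)$ over all admissible $F$ with $\iota<c/2$ is attained with $b=0$ and with the fewest possible line-bundle summands, namely three (two summands of degree $\le \tfrac{c}{2}-1$ cannot reach $c_{1}=c$). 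Writing such an $F$ as $\OP(\tfrac{c}{2}-\alpha)\oplus\OP(\tfrac{c}{2}-\beta)\oplus\OP(\alpha+\beta)$ and minimizing $c_{2}=\tfrac{1}{2}\bigl(c^{2}-\sum d_{i}^{2}\bigr)$ over $(\alpha,\beta)$ with $\alpha,\beta\ge 1$ and $\alpha+\beta<\tfrac{c}{2}$ gives the stated threshold at $(\alpha,\beta)=(1,1)$. This bypasses entirely the ``intermediate step'' you flag as an obstacle.

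That said, your concern is not misplaced: the paper's argument pins down the minimal $c_{2}$ but is silent on why every integer $y$ between the threshold and $\binom{c+1}{2}$ is actually realized by some $F$ with $\iota<c/2$ (the constructions of Theorem \ref{teo range stabile} can produce $\iota\ge c/2$ even above the threshold, e.g.\ $\OP(4)\oplus\OP(2)\oplus\OP(1)^{2}$ for $(c,y)=(8,21)$). So the case analysis you anticipate is implicitly needed in both approaches; the paper simply declares the minimum and leaves the rest to the reader, whereas you spell out the local moves and the $I_{s2}$ endpoint and are honest about the remaining gap.
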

 \begin{proof}
 Assume that $c$ is even. From \ref{c2} it follows that the minimal $c_2$ for bundles $E$  as in Theorem \ref{teo range stabile}, under the condition $\iota <c/2$, is attained for $b=0$ when the number of the indices $i$ such that $a_i>0$ is the minimum possible, i.e. $3$. So we consider 
 $c_2(\OP(\frac{c}{2}-\alpha)\oplus \OP(\frac{c}{2}-\beta)\oplus \OP(\alpha +\beta))$. 
This is a function of $\alpha$ and $\beta$ that, for $0\leq \alpha, \beta<\frac{c}{2}$, attains its minimum for $(\alpha, \beta)=(1,1)$. For $c$ odd, a similar argument applies. \end{proof}

\section{The unstable range}\label{unstable}

We now consider the unstable range, that is, the interval $I_u=[0,\frac{c^2}{4}]$. As a first remark, notice that both endpoints correspond 
to $m$-effective pairs. Indeed, if $c$ is even and $\frac{c^2}{4} \in \N$, then the two pairs $(c,0)$ and $(c,\frac{c^2}{4})$ are attained by the 
two quotient bundles $\OP \oplus \OP(c)$ and $\OP(\frac{c}{2})^2$ respectively, and their $m$-effectiveness follows from Theorem \ref{quot}. 
If instead $c$ is odd, then the right-endpoint of $I_u$ is $\frac{c^2-1}{4}$, and the pair $(c,\frac{c^2-1}{4})$ corresponds to $\OP(\frac{c-1}{2})\oplus\OP(\frac{c+1}{2})$.

\vskip.1in

Recall that effectiveness is a necessary condition for $m$-effectiveness. We are thus interested in studying effective 
pairs $(c,y)$ with $y \in I_u$. These have been completely classified in \cite{ellia_effective}. Contrary to what happens in the stable 
range, where, given $c$, all values $y \in I_s$ give an effective pair $(c,y)$, in the unstable range there are values $y \in I_u$ such that 
the pair $(c,y)$ cannot be attained as Chern classes of a globally generated bundle on $\PP^2$. In other words, there are \emph{gaps} in the effective range. 
The description of these gaps is quite involved; we report it for the reader's convenience, with the warning that our notation 
is slightly different from the original one. 

Denote by $\lf \frac{c}{2} \rf$ the integral part of $\frac{c}{2}$. For every integer $0 \le k \le \lf\frac{c}{2}\rf-1$, let $G_k(0):=[kc + 1,(k+1)c-(k+1)^2-1]$, 
with the convention that if $b>a$ then $[a,b]=\emptyset$. For $3 \le k \le \lf\frac{c}{2}\rf-1$, set $k_0:=\lf\sqrt{k-2}\rf$. 
For every integer $a$ such that $1 \le a \le k_0$ define $G_k(a):=[k(c-a)+a^2+1, k(c-a)+k-1]$. 

Finally, let $G_k=\cup_{a=0}^{k_0} G_k(a)$ and $G=\cup_{k=0}^{\lf\frac{c}{2}\rf-1} G_k$. 
Ellia's classification entails that if $y \in I_u$, the pair $(c,y)$ is effective if and only if $y \in \tilde{I}_u:=I_u \setminus G$. 

\vskip.1in
What can we say about the $m$-effectiveness of pairs $(c,y)$, with $y \in \tilde{I}_u$? 

Define, for all $0 \le k < \lf\frac{c}{2}\rf$, the intervals:

\begin{equation}\label{J_k}
	J_k:=[k c -k^2,(k+1) c -(k+1)^2-1].
\end{equation}

The unstable range $I_u=[0,\frac{c^2}{4}]$ is subdivided into the $\lf\frac{c}{2}\rf$ subintervals $J_k$'s, each of length $c-2k-2$, plus the last endpoint $\lf\frac{c^2}{4}\rf$.

Notice that for all $0 \le k < \lf\frac{c}{2}\rf$ there exist $m$-effective pairs $(c,y)$ with $y \in J_k$. It is enough to consider quotient bundles of type 
$\OP(k) \op \OP(c -k)$, whose Chern classes are $c_1=c$ and $c_2= k c -k^2$. (The two endpoints of $I_u$ thus correspond to the two special values $k=0$ and $k=\frac{c}{2}$ in the 
even case, and $k=0$ and $k=\frac{c-1}{2}$ in the odd case.)

On the other hand, the intervals $J_k$'s contain gaps where the pair $(c,y)$ is not effective. Indeed for all $k$, the set of gaps 
$G_k$ is a subset of the interval $J_k$. We call $\tilde{J}_k:=J_k \setminus G_k$. 

The following result sheds some light on the structure of the intervals $\tilde{J}_k$'s.

\begin{prop}\label{instabile generale} Let $c$ be any positive integer, and let $I_u=[0,\frac{c^2}{4}]$ be the unstable range. For $0 \le k < \lf\frac{c}{2}\rf$ let 
	$J_k=[k c -k^2,(k+1) c -(k+1)^2-1]$, so that $I_u=\cup_{k=0}^{\lf\frac{c}{2}\rf-1} J_k \cup \{\frac{c^2}{4}\}$ if $c$ is even, and 
	$\cup_{k=0}^{\lf\frac{c}{2}\rf-1} J_k \cup \{\frac{c^2-1}{4}\}$ if $c$ is odd. Consider a pair $(c,y)$, with $y$ any integer belonging to $I_u$. 
	Then:
	\begin{enumerate} 
		\item If $y \in J_0$ or $y \in J_1$, then $(c,y)$ is $m$-effective if and only if it is effective. 
		\item If $y \in J_k$ with $k \ge 2$, then there is a subset: 
		\[\mbox{$N_k:=[kc - k^2+ {k+1 \choose 2}+1,(k+1)c - (k+1)^2  - 1] \cap \tilde{J}_k$}\] 
		such that, as soon as $c > (k+1)^2$, the pair $(c,y)$ with $y \in N_k$, is effective, but not $m$-effective.
	\end{enumerate}
\end{prop}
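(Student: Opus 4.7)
For part (1), Ellia's classification gives that the only effective pairs in $J_0\cup J_1$ are $(c,0)$, $(c,c-1)$, and $(c,c)$: the gaps $G_0(0)=[1,c-2]$ and $G_1(0)=[c+1,2c-5]$ exhaust $G_0\cup G_1$, forcing $\tilde{J}_0=\{0\}$ and $\tilde{J}_1=\{c-1,c\}$. Each of these three pairs is realized directly as the Chern classes of a bundle of the form \eqref{sommadiretta}, namely $\OP\op\OP(c)$, $\OP(1)\op\OP(c-1)$, and $Q\op\OP(c-1)$ respectively; the Chern classes follow from Remark \ref{chern}, and $m$-effectiveness from Theorem \ref{quot}.

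For part (2), the strategy is to show that for every effective bundle $E$ with $c_1(E)=c$ and $c_2(E)=y\in N_k$ the necessary cohomological condition of Lemma \ref{h1 vanishing} fails. Since $y<c^2/4$, Bogomolov's inequality forces $E$ to be $\mu$-unstable; let $a_{\max}$ be the largest integer with $\HH^0(E(-a_{\max}))\ne 0$, so that $a_{\max}>c/2$ and a nonzero section of $E(-a_{\max})$ has zero-dimensional vanishing scheme, producing
\[
0\longrightarrow \OP(a_{\max})\longrightarrow E\longrightarrow \II_Z(c-a_{\max})\longrightarrow 0,\qquad \deg Z=y-a_{\max}(c-a_{\max}).
\]
Setting $k':=c-a_{\max}$, both $k'$ and $k+1$ lie in $[0,c/2]$, and since $t\mapsto t(c-t)$ is strictly increasing on that interval the bound $k'(c-k')\le y\le (k+1)c-(k+1)^2-1$ forces $k'\le k$.

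Twisting the displayed extension by $-1$ and using the vanishings $\HH^1(\OP(n))=0$ for all $n$ together with $\HH^2(\OP(c-k'-1))=0$ (which is immediate from $c-k'-1\ge -2$) yields $\HH^1(E(-1))\cong \HH^1(\II_Z(k'-1))$. The restriction sequence $0\to \II_Z(k'-1)\to \OP(k'-1)\to \OO_Z\to 0$ then gives $\hh^1(\II_Z(k'-1))\ge |Z|-{k'+1\choose 2}$. Now $f(t):=t(c-t)+{t+1\choose 2}$ satisfies $f(t+1)-f(t)=c-t$, hence is strictly increasing for $t<c$; combining this with $k'\le k$ and the defining inequality $y\ge k(c-k)+{k+1\choose 2}+1$ of $N_k$,
\[
|Z|-{k'+1\choose 2}\;=\;y-f(k')\;\ge\;y-f(k)\;\ge\;1,
\]
so $\hh^1(E(-1))>0$ and $E$ cannot be $m$-effective. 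Effectiveness of the pair is automatic from $N_k\subseteq\tilde J_k$.

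The subtle point is that the destabilizing index $k'$ depends on the particular bundle $E$, not just on $(c,y)$; what saves the day is the monotonicity of $f$, which makes the obstruction uniform across all admissible values $0\le k'\le k$ and so excludes $m$-effectiveness of the pair (rather than merely of a single bundle in it). The hypothesis $c>(k+1)^2$ is used only to ensure $N_k\cap\tilde J_k$ is non-empty, via an elementary comparison of ${k+1\choose 2}+1$ with $k^2$ and with the length $c-2k-2$ of $J_k$.
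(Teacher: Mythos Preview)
Your proof is correct. Part (1) is identical in spirit to the paper's argument.

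For part (2) you take a genuinely different route. The paper works with a \emph{general} section of $E$, obtaining $0\to\OP\to E\to\II_Y(c)\to 0$ with $\deg Y=y$; it then invokes Ellia's results to see that $Y$ lies on a curve of degree exactly $k$, appeals to the Cayley--Bacharach property $CB(c-3)$ to force the numerical character $\chi(Y)=(n_0,\ldots,n_{k-1})$ to be connected, and computes $\hh^1(\II_Y(c-1))=\sum_i[n_i-c]_+$, showing $n_0>c$ when $\alpha>{k+1\choose 2}$. Your argument instead uses the \emph{maximal destabilizing} sub-line bundle $\OP(c-k')\hookrightarrow E$ (available because $c_2<c_1^2/4$), reduces to $\hh^1(\II_Z(k'-1))$ for the much smaller scheme $Z$, and bounds this below by the crude but sufficient estimate $|Z|-\hh^0(\OP(k'-1))$. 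The monotonicity of $f(t)=t(c-t)+{t+1\choose 2}$ is then exactly what is needed to make the bound uniform over all possible values of $k'\le k$. Your approach is more elementary and fully self-contained: it avoids numerical characters, Cayley--Bacharach, and the external citations to \cite{ellia_effective} that the paper relies on. The paper's method, on the other hand, extracts finer information (for instance the exact value $\hh^1(E(-1))=1$ when $k=2$, used later in Proposition~\ref{c,2c}), which your cruder lower bound does not give.
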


\begin{proof} The proof of the first part reduces to an easy remark. One has that $J_0=\{0\} \cup G_0(0)$, hence $\tilde{J}_0=\{0\}$, and 
	we have already seen more than once that the pair $(c,0)$ is attained by the rank $2$ bundle $\OP \oplus \OP(c)$. 
	Similarly, $J_1=\{c-1,c\} \cup G_1(0)$, hence $\tilde{J}_1=\{c-1,c\}$. The two values correspond to quotients of the bundles $\OP(1) \oplus \OP(c-1)$, and 
	$Q \oplus \OP(c-1)$ respectively. All these bundles are $m$-effective thanks to Theorem \ref{quot}.
	
	\vskip.1in
	
	For the second part, let $(c,y)$ be an effective pair, with $y \in \tilde{J}_k$, and let $E$ be the associated globally generated rank $2$ vector bundle. 
	We are interested in computing the group $\HH^1(E(-1))$. If we find a range in which this group is non-zero, 
	then by Lemma \ref{h1 vanishing} the bundle $E$ cannot be $m$-effective.
	
	Let $s$ be a global section of $E$. Via Hartshorne-Serre correspondence, we get a short exact sequence of type:
	\begin{equation}\label{sec def Y}
	  0 \ra \OP \ra E \ra \II_Y(c) \ra 0,
	\end{equation} 
	where the zero locus $Y=(s)_0$ is a locally complete intersection (l.c.i from now on) $0$-dimensional subscheme of $\PP^2$, of length $y$.  
	We will use properties of $Y$ to deduce information on the cohomology of the bundle $E$. Indeed, from the long exact cohomology sequence 
	induced by \eqref{sec def Y} twisted by $\OP(-1)$, we obtain the equality $\hh^1(E(-1))=\hh^1(\II_Y(c-1))$.
	
	As proven in \cite[Prop. 1.33]{g-h_residues}, the scheme $Y$ satisfies Cayley-Bacharach property $CB(c-3)$.

    We say that a l.c.i. $0$-dimensional subscheme $Y$ in $\PP^2$ satisfies the \emph{Cayley-Bacharach property} for curves of degree $n \ge 1$ if any curve of degree $n$ 
    containing a subscheme $Y' \subset Y$ of co-length $1$, contains $Y$. If this is the case, we write $Y$ satisfies $CB(n)$. 
    Remark that this implies that $Y$ satisfies $CB(i)$ for all $i \le n$.

    By \cite[Lem. 3.2]{ellia_effective}, since the group of sections $\HH^0(\II_Y(k))$ is non-zero, $Y$ lies on a curve of degree $k$, but not on a curve of degree $k-1$. 
    This allows us to obtain useful information on its numerical character, which in turn gives a method to compute $\hh^1(\II_Y(c-1))$.

	Recall that if $Z$ is a $0$-dimensional scheme in the projective plane, its \emph{numerical character} $\chi(Z)=(n_0,\ldots,n_{\sigma-1})$ is a sequence of integers 
	which encodes the Hilbert function of $Z$, with the following properties:
	\begin{enumerate}
	\item $n_0 \ge n_1 \ge \ldots \ge n_{\sigma -1} \ge \sigma$, where $\sigma$ is the minimal degree of a plane curve containing the scheme $Z$;
	\item $\deg Z = \sum_{i=0}^{\sigma -1} (n_i-i)$;
	\item $\hh^1(\II_Z(t))=\sum_{i=0}^{\sigma-1}[n_i-t-1]_+ - [i-t-1]_+$, where $[x]_+:=\max\{x,0\}$.
	\end{enumerate}
	The numerical character is \emph{connected} if $n_i \ge n_{i+1}+1$ for all $0 \le i \le \sigma-1$.

	In our setting, we have $\chi(Y)=(n_0,n_1,\ldots,n_{k-2})$ with: 
	\begin{enumerate}
	\item $n_0 \ge \ldots \ge n_{k-1} \ge k$, 
	\item $\sum_{i=0}^{k-1}n_i=y+{k \choose 2}$. 
	\end{enumerate}

	From \cite[Lem. 4.13]{ellia_effective} we learn that $\chi(Y)$ must be connected. 
	If for some index $j$ we had $n_{j}>n_{j-1}+1$ then $Y$ would not satisfy $CB(i)$ for all $i \ge n_r -1 \ge k-2$, where the second inequality follows from (a). 
	Since we do know that $Y$ satisfies $CB(c-3)$ and $k-2 \le \lf\frac{c}{2}\rf-2 < c-3$, this cannot happen.

\vskip.05in

 	We can thus use the numerical character to compute that: 
	$$\hh^1(\II_Y(c-1))= \sum_{i=0}^{k-1} [n_i -c]_+ - [i-c]_+=\sum_{i=0}^{k-1} [n_i -c]_+,$$
	\noindent where the last equality holds because $i \le k-1 <c$, and thus $[i-c]_+=0$ for all $i$. 

	Write the value $y \in \tilde{J}_k$ as $y=kc-k^2+\alpha$, with $0\le \alpha \le k^2$. Then we have:
	$$\sum_{i=0}^{k-1}n_i=y+{k \choose 2}=kc-k^2+\alpha + {k \choose 2}= kc +\alpha -{k+1 \choose 2}.$$
	The condition above entails that $n_0 \ge c-\frac{k+1}{2}+\frac{\alpha}{k}$, and thus $n_0-c \ge -\frac{k+1}{2}+\frac{\alpha}{k}$. 

	If $\alpha > {k+1 \choose 2}$ then $n_0 - c \gneq 0$, so the cohomology group $\HH^1(\II_Y(c-1))$ has positive dimension, and the pair $(c,y)$ cannot be $m$-effective.
	
	Before we can conclude, we need to make sure that the set $N_k$ is contained in the interval $J_k$, and this happens exactly as soon as $c > (k+1)^2$.
\end{proof}

Theorem \ref{instabile generale} combined with an explicit construction of $m$-effective bundles as quotients proves the following 
Corollary \ref{instabile fino 8}. Indeed, for $k\le 2$ one has that $\tilde{J}_k \setminus N_k= \emptyset$, meaning that 
for $c \le 7$ there is nothing to prove. For $c=8$ we refer the reader to Section \ref{esempio}, where this example is worked out in detail. 
Notice also that for low values of $c$ some of the $J_k$ intervals overlap, allowing us to construct several non-isomorphic bundles associated to the same 
$m$-effective pair.

\begin{cor}\label{instabile fino 8} 
	Let $0 < c \le 8$ be a positive integer, and let $I_u=[0,\frac{c^2}{4}]$ be the unstable range. 
	There is a complete classification of all $m$-effective pairs $(c,y)$ with $y \in I_u$.
\end{cor}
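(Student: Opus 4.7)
The plan is to combine Proposition \ref{instabile generale} (which excludes $m$-effectiveness for pairs in $N_k$) with Theorem \ref{quot} (which produces $m$-effective bundles as rank $2$ quotients of direct sums of $Q$ and line bundles $\OP(i)$). For each $c$ with $1 \le c \le 8$ and each $k$ with $0 \le k \le \lf c/2 \rf - 1$, I would enumerate the effective pairs $(c,y)$ with $y \in \tilde J_k$ by inspecting the gaps $G = \cup_{k,a} G_k(a)$, discard the ones lying in $N_k$, and realise each of the remaining pairs as the Chern classes of an explicit quotient of a bundle $F$ of the form \eqref{sommadiretta}.

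For $c \le 7$ the first step is essentially automatic: one has $\lf c/2 \rf - 1 \le 2$, and a direct inspection of the endpoints of $N_k$ shows that $N_k \cap \tilde J_k = \emptyset$ in every case. For $k = 0, 1$ this is already part (1) of Proposition \ref{instabile generale}, while for $k = 2$ the left endpoint of $N_k$ is $kc - k^2 + \binom{k+1}{2} + 1 = 2c$ and the right endpoint of $\tilde J_2$ is $3c - 10$, so $N_2 \cap \tilde J_2 = \emptyset$ as soon as $c \le 9$. It then suffices to exhibit, for each effective $y$, a decomposition $F = (\oplus_{i \ge 1} \OP(i)^{a_i}) \op Q^b$ with $c_1(F) = c$ and $c_2(F) = y$. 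The standard building blocks $\OP(k) \op \OP(c - k)$ (giving $y = k(c - k)$) and $Q \op \OP(c - 1)$ (giving $y = c$), together with small perturbations such as $\OP(1)^2 \op \OP(c - 2)$ (giving $y = 2c - 3$) and $Q \op \OP(1) \op \OP(c - 2)$ (giving $y = 2c - 2$), cover all the required values; the Chern class formulas of Remark \ref{chern} provide the verification, which is a finite check at each value of $c$.

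The case $c = 8$ proceeds along the same lines: one first verifies that $N_k \cap \tilde J_k = \emptyset$ for all $0 \le k \le 3$, leaving the effective set $\{0, 7, 8, 12, 13, 14, 15, 16\}$ to be realised. The construction of a suitable quotient bundle for each of these values is carried out explicitly in Section \ref{esempio}, whose computations fill in precisely the last piece of the classification.

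The main obstacle is this final bookkeeping step: for each effective pair one has to solve the Chern class equations \eqref{c1}--\eqref{c2} with non-negative integer unknowns $a_i, b$ subject to $\sum_i i a_i + b = c$. For $c \le 8$ the number of admissible decompositions is small enough that a case-by-case match is manageable, but the same brute-force strategy becomes impractical as $c$ grows. This is the very reason why the complete classification stops at $c = 8$ here, and why for larger $c$ in the stable range one must appeal to the more uniform inductive construction of Theorem \ref{teo range stabile}.
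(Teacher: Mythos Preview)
Your proposal is correct and follows essentially the same approach as the paper: combine Proposition~\ref{instabile generale} with explicit quotient constructions via Theorem~\ref{quot}, and for $c=8$ defer the concrete list to Section~\ref{esempio}. Your argument is in fact more carefully stated than the paper's own justification (the paper writes ``$\tilde{J}_k \setminus N_k = \emptyset$'', which appears to be a typo for the statement you give, namely $N_k \cap \tilde J_k = \emptyset$ for the relevant $k$ and $c$).
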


\vskip.2in

We recall that giving a globally generated rank $2$ vector bundle $F$ on $\PP^2$, together with a linear subspace $V \subseteq \HH^0(F)$ 
of dimension $N+1$, and an epimorphism $V \otimes \OP \epi F$, is equivalent to giving a map 
$\varphi_V:\PP^2 \to \GG(1,N)$ from $\PP^2$ to the Grassmannian of lines in $\PP^N$. When $V=\HH^0(F)$, we write $\varphi_F:=\varphi_{\HH^0(F)}$. 

Given an effective pair $(c,y)$ it is thus very natural to ask whether or not the associated globally generated bundle gives rise to an embedding. If 
the pair $(c,y)$ is also $m$-effective, then the answer to this question is always positive: 
for any bundle $E$ coming from an extension of type \eqref{estensione2}, the map $\varphi_F$ is an embedding of $\PP^2$ in $\GG(1,2c+1)$. 
This is proved in:

\begin{prop}\cite[Prop. 2.4]{Fania_Mezzetti} 
	Let $c$ be a positive integer, and let $A$ be a skew-symmetric matrix of linear forms in $3$ variables, 
	having size $2c+2$ and constant rank $2c$. Let $E$ be the globally generated vector bundle defined as the cokernel of $A$. 
	Then $E$ defines a $2c$-uple embedding of $\PP^2$ in $\GG(1,2c+1)$.
\end{prop}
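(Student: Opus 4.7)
The plan is to exhibit $\varphi_E$ explicitly and then to verify the two defining properties of a closed embedding: injectivity on closed points and injectivity of the differential at every point. To set up, I would fix $V=\CC^{2c+2}$ and note that for each $[x]\in \PP^2$ the specialisation $A(x)$ is a skew-symmetric endomorphism of $V$ of rank $2c$, so its kernel $\Pi_x:=\ker A(x)\subseteq V$ is a $2$-plane. The map $\varphi_E$ is then precisely $[x]\mapsto \Pi_x\in \GG(2,V)=\GG(1,2c+1)$, and the sequence \eqref{estensione2} identifies the kernel sheaf of $A$ with $K:=E(-c-1)\hookrightarrow V\otimes \OP(-1)$.

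\emph{Injectivity on points.} Assume $\Pi_x=\Pi_y=:\Pi$ for distinct $[x],[y]\in \PP^2$. By linearity of $A$, $A(z)|_\Pi = 0$ for every $z\in \langle x,y\rangle$, and the constant-rank hypothesis forces $\Pi=\ker A(z)$ for every $[z]$ on the line $L\subseteq \PP^2$ joining $[x]$ and $[y]$. The composition $\Pi\otimes \OO_L(-1)\hookrightarrow V\otimes \OO_L(-1)\xrightarrow{A|_L} V\otimes \OO_L$ then vanishes identically, so it factors through a sheaf-level inclusion $\Pi\otimes \OO_L(-1)\hookrightarrow K|_L$ of rank-$2$ bundles. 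Taking determinants yields a nonzero map $\OO_L(-2)\hookrightarrow \det K|_L = \OO_L(-c-2)$, impossible for $c>0$ since $\HH^0(\OO_L(-c))=0$.

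\emph{Injectivity of the differential.} Fix $[x]\in \PP^2$ and a nonzero tangent vector $v$, lifted to $w\in V\setminus \CC x$. Writing $A(x+tw)=B+tC$ with $B:=A(x)$, $C:=A(w)$, the constant-rank hypothesis yields a local frame $(e_1(t),e_2(t))$ of $\ker(B+tC)$ with $e_i(0)$ a basis of $\Pi_x$. The first-order Taylor expansion of $(B+tC)e_i(t)=0$ gives $Be_i'+Ce_i=0$, and $e_i\mapsto e_i'$ modulo $\Pi_x$ represents $d\varphi_E|_{[x]}(v)\in \Hom(\Pi_x,V/\Pi_x)$. Using $B|_{\Pi_x}=0$, this homomorphism vanishes exactly when $Ce_i=0$ for each $i$, i.e.\ when $\Pi_x\subseteq \ker A(w)$; by constant rank this inclusion is an equality, so $\varphi_E([x])=\varphi_E([w])$. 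Since $[w]\neq [x]$, this contradicts the injectivity established above.

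Combining the two steps would show that $\varphi_E\colon \PP^2\to \GG(1,2c+1)$ is a closed embedding. The ``$2c$-uple'' numerical content then follows from $\varphi_E^*\OO_{\GG}(1)=\det E=\OP(c)$ combined with Schubert calculus on $\GG(1,2c+1)$. The subtle part is the twist bookkeeping in the injectivity step: a constant subspace $\Pi\subseteq V$ lifts into the twisted bundle $V\otimes \OP(-1)|_L$ inevitably as $\OO_L(-1)^{\oplus 2}$, and this twist is precisely what renders the determinant comparison incompatible with $c>0$.
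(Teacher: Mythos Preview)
The paper does not contain its own proof of this proposition: it is quoted as \cite[Prop.~2.4]{Fania_Mezzetti} with the preamble ``This is proved in:''. There is therefore no in-paper argument to compare against.

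Judged on its own, your proof of the embedding claim is correct. In the injectivity step you can in fact say more: once $\Pi=\ker A(z)$ for every $[z]\in L$, the fibres of $K|_L$ and of $\Pi\otimes\OO_L(-1)$ agree everywhere inside $V\otimes\OO_L(-1)$, so the inclusion is an isomorphism $K|_L\simeq\OO_L(-1)^{\oplus 2}$; comparing determinants then yields $-2=-c-2$, a contradiction for $c>0$. Your route via a nonzero map $\OO_L(-2)\to\OO_L(-c-2)$ is the same computation phrased more cautiously. In the differential step, the phrase ``using $B|_{\Pi_x}=0$'' is really ``using $\ker B=\Pi_x$'', which the constant-rank hypothesis guarantees; the conclusion $Ce_i=0\Leftrightarrow e_i'\in\Pi_x$ is then exactly right. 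One notational slip: you fix $V=\CC^{2c+2}$ at the outset, but then lift the tangent vector to ``$w\in V\setminus\CC x$'', where the relevant ambient space is the $3$-dimensional space of parameters (variables of $A$), not $\CC^{2c+2}$; this should be disambiguated.

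The only part not fully argued is the ``$2c$-uple'' qualifier. Your observation $\varphi_E^*\OO_{\GG}(1)=\det E=\OP(c)$ is correct, but on its own it gives Pl\"ucker degree $c^2$, not $2c$; whatever numerical invariant ``$2c$-uple'' names in \cite{Fania_Mezzetti} (e.g.\ a bidegree or order/class of the resulting congruence, computed via Schubert calculus on the pulled-back universal bundles) you have gestured at rather than computed. If that qualifier matters for the application, it needs an explicit calculation tied to the precise definition used in the source.
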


From this viewpoint, the pair $(c,2c)$ is particularly interesting. In Proposition \ref{instabile generale} we have seen that 
as soon as $c > 9$, then $(c,2c)$ is not $m$-effective. Nevertheless, under some extra assumption the associated globally generated bundle $E$ 
gives an embedding of $\PP^2$ in the Grassmannian $\GG(1,2c+1)$. This is the content of the following:

\begin{prop}\label{c,2c}
	Given any integer $c > 9$, the pair $(c,2c)$ is effective but not $m$-effective, that is, there exists a 
	globally generated rank $2$ vector bundle $E$ on $\PP^2$ with Chern classes $(c,2c)$, but $E$ cannot be the cokernel of 
	a skew-symmetric matrix of linear forms in $3$ variables, having size $2c+2$ and constant rank $2c$. Moreover, 
	if there are no lines $L$ such that $E$ splits as $E|_L \simeq \OO_L \op \OO_L(c)$, 
	then $E$ induces an embedding $\varphi_V:\PP^2 \ra \GG(1,2c+1)$, where $V \simeq \CC^{2c+2} \subseteq \HH^0(E)$.
\end{prop}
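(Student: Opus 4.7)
The proposition has three parts: effectiveness, failure of $m$-effectiveness, and the embedding claim.

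The first two follow immediately from earlier results. For effectiveness, Ellia's classification locates $y=2c$ in the interval $J_2=[2c-4,\,3c-10]$ and outside the gap $G_2=G_2(0)=[2c+1,\,3c-10]$, so $(c,2c)$ is effective for $c\ge10$. For the failure of $m$-effectiveness, Proposition \ref{instabile generale} applied with $k=2$ yields $N_2=[2c,\,3c-10]\cap \tilde J_2 \ni 2c$, valid since $c>9=(k+1)^2$.

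For the embedding, fix $E$ globally generated with $(c_1,c_2)=(c,2c)$ satisfying the splitting hypothesis, and via Hartshorne-Serre write $0\to\OP\to E\to\II_Y(c)\to 0$ with $Y$ a length-$2c$ l.c.i.\ subscheme. The proof of Proposition \ref{instabile generale} forces the numerical character of $Y$ to be $(c+1,c)$, so $Y$ lies on a conic $C$. Routine computations using $0\to\OP(c-2)\to\II_Y(c)\to\OO_C\to 0$ and its twist by $\OP(-1)$ give $\hh^0(E)=\binom{c}{2}+2$, $\hh^1(E)=0$, and $\hh^1(E(-1))=1$. In particular $\hh^0(E)>2c+2$, so one can pick a generic subspace $V\subset H^0(E)$ of dimension $2c+2$ generating $E$.

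To show $\varphi_V$ is an embedding for a generic such $V$, a standard general-position argument reduces the question to proving that $H^0(E)\to H^0(E|_Z)$ is surjective for every length-$2$ subscheme $Z\subset \PP^2$; equivalently, using $\hh^1(E)=0$, that $\hh^1(E\otimes\II_Z)=0$. Letting $L$ be the unique line containing $Z$, the sequence
\[
0\to E(-1)\to E\otimes\II_Z\to E|_L(-2)\to 0
\]
identifies $\hh^1(E\otimes\II_Z)$ with $\coker\bigl(\delta_Z\colon H^0(E|_L(-2))\to H^1(E(-1))\cong\CC\bigr)$. Here $\hh^1(E|_L(-2))=0$ is exactly the content of the splitting hypothesis: writing $E|_L=\OO_L(a)\oplus\OO_L(c-a)$, it forces $a\ge 1$ and $c-a\ge 1$, from which the vanishing follows.

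The main obstacle is then proving $\delta_Z\ne 0$ for every $Z$. I would attack this by interpreting $\delta_Z$ as pairing with the extension class of the displayed sequence, exploiting the Cayley-Bacharach property $CB(c-3)$ satisfied by $Y$ (already used in Proposition \ref{instabile generale}) together with the non-split form of $E|_L$ to exhibit an element of $H^0(E|_L(-2))$ with nonzero image in the $1$-dimensional $H^1(E(-1))$. An alternative approach that bypasses this case analysis is to compose $\varphi_V$ with the Plücker embedding $\GG(1,2c+1)\hookrightarrow\PP(\Lambda^2 V^*)$ and reduce the question to showing that the wedge map $\Lambda^2 V\to H^0(\OP(c))$ is surjective; since $\binom{2c+2}{2}\gg\binom{c+2}{2}$, this holds for generic $V$ provided $\Lambda^2 H^0(E)\to H^0(\OP(c))$ is surjective, a global version of the same condition that the splitting hypothesis should guarantee.
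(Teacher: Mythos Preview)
Your treatment of the first two assertions is correct and matches the paper.

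The embedding argument, however, has a genuine gap. You reduce to showing $\hh^1(E\otimes\II_Z)=0$ for every length-$2$ subscheme $Z$, equivalently $\delta_Z\neq 0$, and then acknowledge this as ``the main obstacle'' without resolving it: neither the extension-class/Cayley--Bacharach sketch nor the wedge-map sketch is carried to a conclusion. In fact the condition $\hh^1(E\otimes\II_Z)=0$ is stronger than what an embedding into a Grassmannian of \emph{lines} requires, and it is not clear that $\delta_Z\neq 0$ holds for every $Z$.

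The paper avoids this difficulty entirely. The correct separation criterion for $\varphi_E:\PP^2\to\GG(1,N)$ is that $\hh^0(E)-\hh^0(E\otimes\II_\xi)\ge 3$ for every length-$2$ scheme $\xi$; since $\hh^1(E)=0$, this is equivalent to $\hh^1(E\otimes\II_\xi)\le 1$ (this is the content of Arrondo's remark cited in the paper). From your own exact sequence
\[
0\to E(-1)\to E\otimes\II_\xi\to E|_L(-2)\to 0
\]
the surjection $H^1(E(-1))\twoheadrightarrow H^1(E\otimes\II_\xi)$ (valid because $\hh^1(E|_L(-2))=0$ under the splitting hypothesis) together with $\hh^1(E(-1))=1$ gives $\hh^1(E\otimes\II_\xi)\le 1$ immediately---no analysis of $\delta_\xi$ is needed. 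Thus $\varphi_E$ embeds $\PP^2$ in $\GG(1,N)$ with $N=\hh^0(E)-1$, and one then projects to $\GG(1,2c+1)$ by choosing the center off the $5$-dimensional secant variety of the image, which is possible since $N\ge {c\choose 2}+1\gg 5$ for $c>9$.

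In short: replace the target $\hh^1(E\otimes\II_Z)=0$ by $\hh^1(E\otimes\II_Z)\le 1$, and the argument closes in one line.
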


\begin{proof}
The first part of the statement is a straightforward consequence of Proposition \ref{instabile generale}. For the second part, 
let $E$ be a globally generated vector bundle associated with the effective pair $(c,2c)$, and let us look at the induced map 
$\varphi_E$. We will start by proving that $\varphi_E$ is an embedding. 

Let $\xi \subset \PP^2$ be any 0-dimensional length $2$ subscheme. 
We need to show that the inequality $\hh^0(E \otimes \II_\xi) \le \hh^0(E)-3$ holds. Let $L$ be the line spanned by $\xi$. We have a sequence:
\begin{equation}\label{1}
  0 \ra \OP(-1) \ra \II_\xi \ra \OO_L(-2) \ra 0.
\end{equation}
Tensoring it by $E$ and computing cohomology, we see that:
\begin{equation}
  \hh^0(E \otimes \II_\xi)=\hh^1(E \otimes \II_\xi)+\hh^0(E(-1)) -\hh^1(E(-1)) + \hh^0(E|_L(-2)) - \hh^1(E|_L(-2)).
\end{equation}
On the other hand from the short exact sequence of definition of the hyperplane $L$ tensored by $E$ we get:
\begin{equation}
  \hh^0(E)= \hh^0(E(-1)) -\hh^1(E(-1))+ \hh^0(E|_L).
\end{equation}

Repeating step by step the proof of Proposition \ref{instabile generale}, one sees that for $k=2$ 
there is only one possibility for the numerical character of the 0-dimensional scheme $Y$ defined by \eqref{sec def Y}, namely 
$\chi(Y)=(c+1,c)$. Hence we can compute that $\hh^1(E(-1))=\hh^1(\II_Y(c-1))=1$.

Adding this information to what we knew before, we obtain:
\begin{equation}\label{2}
  \hh^0(E)-\hh^0(E \otimes \II_\xi)=\hh^0(E|_L) - [\hh^0(E|_L(-2)) - \hh^1(E|_L(-2))] -\hh^1(E \otimes \II_\xi).
\end{equation}

Since $E$ is globally generated, $E|_L \simeq \OO_L(c-b)\op \OO_L(b)$, where $\lf\frac{c}{2}\rf \le b\le c$. 
As long as $b \neq c$, we have $\hh^0(E|_L)-\hh^0(E|_L(-2))=4$ and $\hh^1(E|_L(-2))=0$, forcing $\hh^1(E \otimes \II_\xi) \le 1$. 
As it is shown in \cite[Rem. 2.5]{note_grass_arrondo}, this is a sufficient condition for $\varphi_E$ to be an embedding.

Notice that if there exists a line $L$ such that $E|_L \simeq \OO_L \op \OO_L(c)$, then $\hh^0(E|_L)=c$, $\hh^0(E|_L(-2))=c-1$ and $\hh^1(E|_L(-2))=1$, 
so \eqref{2} becomes:
\begin{equation}
  \hh^0(E)-\hh^0(E \otimes \II_\xi)=c-(c-1)+1 - \hh^1(E \otimes \II_\xi)=2- \hh^1(E \otimes \II_\xi)\le 2,
\end{equation}
and $\varphi_E$ cannot be an embedding.

To conclude our proof, it is enough to observe that the embedding $\varphi_E:\PP^2 \ra \GG(1,N)$, 
with $N=\hh^0(E)-1$, composed with the projection $\GG(1,N) \ra \GG(1,2c+1)$ is still an embedding as long as we stay out of the (5-dimensional) secant variety. 
From the cohomology of \eqref{sec def Y} we see that $N \ge \chi(E)-1 = \lf\frac{c}{2}\rf(c-1)+2 \ge 34$ if $c \ge 9$, 
so in our range this is always possible.
\end{proof}

\section{An example}\label{esempio}

Here we analyze in detail the case $c=8$. We believe that in this case the situation is simple enough to be explained 
in detail, yet complicated enough to have some interest for explaining our techniques. 

\vskip.05in

If $c=8$ then by Proposition \ref{sharp bounds} the value $y$ belongs to the interval $I=[0,36]$. 
$I$ is divided into the unstable range $I_u=[0,16]$ and stable range $I_s=[17,36]$, which in turn is 
$I_s=I_{s1} \cup I_{s2}=[17,27] \cup [28,36]$. 

\vskip.05in

The first table below lists all $m$-effective pairs $(8,y)$ with $y \in I_u$. Recall from Section \ref{unstable} that the unstable range $I_u$ is a 
union of sub-intervals $J_k$, with $k$ varying from $0$ to $3$, together with the value $\frac{c^2}{4}=16$. 
The intervals $J_k$'s are listed in the first column. The second column contains 
the values of $y$, and for each of them the third column contains either the explicit construction for the bundle $E$, in case the pair is 
$m$-effective, or, otherwise, the reason why $E$ cannot be constructed. Notice that in the case $y=15$ there are two non-isomorphic bundles that can 
be associated to the same $m$-effective pair $(8,15)$. 

\vskip.2in

\begin{center}
\bgroup
\def\arraystretch{1.3}
\begin{tabular}{|c|c|c|}
\hline
&$y=c_2(E)$& $E$ quotient of: \\
\hline
\hline
$J_0=[0,6]$&0&$\OP \op \OP(8)$\\
\hline
&$[1,6]$ & gap $G_0(0)$\\
\hline
$J_1=[7,11]$&7 & $\OP(1) \op \OP(7)$\\
\hline
&8 & $Q \op \OP(7)$\\
\hline
&$[9,11]$&gap $G_1(0)$\\
\hline
$J_2=[12,14]$&12 & $\OP(2) \op \OP(6)$\\
\hline
&13 &$\OP(1)^2 \op \OP(6)$\\
\hline
&14 &$Q \op \OP(1) \op \OP(6)$\\
\hline
$J_3=\{15\}$&15&  $\OP(3) \op \OP(5)$ or $Q^2 \op \OP(6)$\\
\hline
$\frac{c^2}{4}$&16& $\OP(4)^2$\\
\hline
\end{tabular}
\egroup
\end{center}

\vskip.1in

Let us now move on to the stable range; we start with $I_{s2}=[28,36]$. As explained in \cite{le_potier_stabili}, and in the proof of Theorem \ref{teo range stabile}, 
all these values are attained by bundles that are quotients of sums of copies of $\OP(1)$ and $Q$. 
More in detail, notice that if we have $c_2(\OP(1)^a \oplus Q^b)=y$ then $c_2(\OP(1)^{a-1} \oplus Q^{b+1})=y+1$, i.e. when we substitute 
a summand of type $\OP(1)$ with one of type $Q$ the second Chern class grows by 1. 
The endpoints $28$ and $36$ of the interval $I_{s2}$ are attained by (quotients of) $\OP(1)^8$ and $Q^8$ respectively, and all the intermediate values 
are attained by substituting step by step copies of $\OP(1)$ with copies of $Q$.

\vskip.1in

Finally, we look at the interval $I_{s1}=[17,27]$. The second table illustrates 
the algorithm in the proof of Theorem \ref{teo range stabile}, that allows one to explicitly construct 
the $m$-effective bundle associated to any pair $(c,y)$ with $y \in I_{s1}$. In the first three columns 
there are values of $y$, $x= {8 \choose 2}-y$, and of the decomposition \eqref{decomposizione x} of the latter, that we recall is of the 
form \mbox{$x={k_1\choose 2}+{k_2\choose 2}+\ldots +{k_h\choose 2}+a_2$}. (See Steps 1 and 3 of the algorithm.)

In the fourth column we wrote the value $\gamma_0(x)=k_1+k_2+\ldots +k_h+2a_2$ from Step 4, with uniquely determined $a_2\in \{0,1,2\}$. 
Following Step 2, we have added in the next column the value $c_m(x)=1+\lceil \sqrt{4x+1}\rceil$. 

Then in all cases where $\gamma_0(x) \le c_m(x)$ we can directly construct a bundle whose quotient is the $m$-effective $E$ we are after. 
The two cases where this does not happen are indicated with $(\ast)$ and $(\ast \ast)$. They correspond to cases where the algorithm stops at Step 4. 

\vskip.2in

\begin{center}
\bgroup
\def\arraystretch{1.4}
\begin{tabular}{|c|c|c|c|c|c|}
	\hline
	$y=c_2(E)$& $x={8 \choose 2}-y$&decomposition \eqref{decomposizione x}&$\gamma_0(x)$&$c_m(x)$&$E$ quotient of:\\
	\hline
	\hline
	17&11&${5 \choose 2}$ +1&7&8&$\OP(5) \op \OP(2) \op \OP(1)$\\
	\hline
	18&10&${5 \choose 2}$&5&8&$\OP(5) \op \OP(1)^3$\\
	\hline
	19&9&${4 \choose 2} + {3 \choose 2}$&7&8&$\OP(4) \op \OP(3) \op \OP(2)$\\
	\hline
	20&8&${4 \choose 2} +2$&8&7&$(\ast)$\\
	\hline
	21&7&${4 \choose 2} +1$&6&7&$\OP(4) \op \OP(2) \op \OP(1)^2$\\
	\hline
	22&6&${4 \choose 2}$&4&6&$\OP(4) \op \OP(1)^4$\\
	\hline
	23&5&${3 \choose 2}+2$&7&6&$(\ast \ast)$\\
	\hline
	24&4&${3 \choose2} +1$&5&6&$\OP(3) \op \OP(2) \op \OP(1)^3$\\
	\hline
	25&3&${3 \choose 2}$&3&5&$\OP(3) \op \OP(1)^5$\\
	\hline
	26&2&2 &4&4&$\OP(2)^2 \op \OP(1)^4$\\
	\hline
	27&1& 1&2&4&$ \OP(2) \op \OP(1)^6$\\
	\hline
\end{tabular}
\egroup
\end{center}

\vskip.1in

For the two cases marked with asterisks, we have to use Step 5 in the algorithm of Theorem \ref{teo range stabile}. For the case $x=8$, one has 
$\gamma_0(8)=8 > c_m(8)=7$. So let us write $8={5 \choose 2}-2$, and consider $\gamma_1(8)=7 =c_m(8)$. By applying the algorithm, we see that 
$E$ is a quotient of a bundle of the form $\OP(5) \op \OP(1) \op Q^2$. 

Similarly for $x=5$ we have $\gamma_0(5)=7 > 6=c_m(5)$. Then one rewrites $5$ as ${4 \choose 2}-1$, so that $\gamma_1(5)=5 < c_m(5)$ and $E$ is 
quotient of the direct sum $\OP(4) \op \OP(1)^3 \op Q$.

\vskip.1in

\bibliographystyle{amsalpha}
\bibliography{biblioada}

\end{document}